\def\HDA{hidden domain of attraction }
\def\MRV{multivariate regular variation }
\def\bX{\boldsymbol X}
\def\bone{\boldsymbol 1}
\def\bzero{\boldsymbol 0}
\def\bx{\boldsymbol x}
\def\bz{\boldsymbol z}
\def\by{\boldsymbol y}
\def\binfty{\boldsymbol \infty}
\newcommand{\field}[1]{\mathbb{#1}}
\DeclareMathOperator{\PR}{\field{P}}             
\DeclareMathOperator{\E}{\field{E}}              
\def\N{\field{N}}                                
\def\R{\field{R}}                                
\def\F{\field{F}}                                
\def\PR{\mathop{\rm I\kern -0.20em P}\nolimits}  
\def\E{\mathop{\rm I\kern -0.20em E}\nolimits}   
\def\N{\mathop{\rm I\kern -0.20em N}\nolimits}   
\def\R{\mathop{\rm I\kern -0.20em R}\nolimits}   
\def\F{\mathop{\rm I\kern -0.20em F}\nolimits}   
\newtheorem{thm}{Theorem}[section]
\newtheorem{prop}[thm]{Proposition}
\newtheorem{ex}[thm]{Example}
\newtheorem{defn}[thm]{Definition}
\newtheorem{rem}[thm]{Remark}
\numberwithin{equation}{section}
\title[Hidden domain of attraction]{Modeling Multiple Risks: Hidden domain of attraction} 
\author[ A.\ Mitra ]{Abhimanyu Mitra }
\address{Abhimanyu Mitra\\School of OR\&IE, Cornell University,
Ithaca, NY-14853} \email{am492@cornell.edu}
\author[ S. I.\ Resnick ]{Sidney I. Resnick}
\address{Sidney I. Resnick\\School of OR\&IE, Cornell University,
Ithaca, NY-14853} \email{sir1@cornell.edu}
\keywords{Regular variation, maximal domain of attraction,  spectral measure, risk sets.} 
\thanks{S. Resnick and A. Mitra were partially supported by ARO Contract W911NF-10-1-0289 at Cornell University.}
\begin{document}

\begin{center}
\maketitle

\end{center}
\normalsize
\vspace{-.2in}
\begin{abstract}
Hidden regular variation is a sub-model of multivariate regular
variation and facilitates accurate estimation of joint tail
probabilities. We generalize the model of hidden regular
variation to what we call hidden domain of attraction. We exhibit
examples that illustrate the need for a more general model and  discuss
detection and estimation techniques. 
\end{abstract}

\bibliographystyle{abbrv}
\section{Introduction} \label{intro}
Tail probabilities, especially joint tail probabilities,  provide useful
{risk measures for} many  applications including finance
\citep{poon:rockinger:tawn:2003}, environmental protection
\citep{smith:2003} and hydrology \citep{dehaan:deronde:1998,
  bruun:tawn:1998}. Multivariate extreme value theory (MEVT) is
a  tool to approximate such tail probabilities but in many common
circumstances the tool gives an incorrect tail probability approximation of $0$.
This paper points out that even when hidden regular variation (HRV) is not
applicable, a more general concept called {\it hidden domain of
  attraction\/} may yield a fix.

The joint distribution $H(\cdot)$ of a bivariate random vector ${\bX }
= (X^1, X^2)$ belongs to {the} maximal domain of attraction of a bivariate distribution
$G(\cdot)$  if there exist scaling and centering constants
$a_n^i>0$ and $b_n^i$, $i=1,2$,  such that 
for all
continuity points ${\bx } = (x^1, x^2)$ of $G$,
 \begin{equation}\label{eqn:mevt_original}
 \lim_{n \to \infty} { \left[ H(a^1_n x^1 + b^1_n , a^2_n x^2 +
 b^2_n )\right]}^n = G(x^1, x^2)
\end{equation}
and both the marginal distributions of $G(\cdot)$, $G^1(\cdot)$ and
$G^2(\cdot)$, are non-degenerate extreme value distributions
\cite[page 208]{dehaan:ferreira:2006}. The convergence relation
\eqref{eqn:mevt_original} is equivalent to the condition that as $n
\to \infty$, 
\begin{equation}\label{eqn:mevt}
n P\left[ \left( \frac{X^1 - b^1_n}{a^1_n}, \frac{X^2 -
 b^2_n}{a^2_n} \right) \in \cdot \,\right] \stackrel{v}{\rightarrow} \nu(\cdot)
\end{equation}
in $M_+(\E)$, where $\E = [-\infty, \infty]^2	 \setminus \{
(-\infty, -\infty)\}$ or $\E = [0, \infty]^2 \setminus \{ (0, 0)\}$ or
$\E = [-\infty, \infty] \times [0, \infty] \setminus \{ (-\infty, 0)
\}$ or $\E = [0, \infty] \times [-\infty, \infty] \setminus \{ (0,
-\infty) \}$ depending on the {case.}
{Also}, $M_+(\E)$ denotes the set of Radon measures on $\E$ and
$\stackrel{v}{\rightarrow}$ denotes vague convergence. The limit
measure $\nu(\cdot)$ in \eqref{eqn:mevt} is related to the limit
distribution $G(\cdot)$ in \eqref{eqn:mevt_original} as follows: for
${\bx } = (x^1, x^2) \in \E$, 
\begin{equation}\label{eqn:nuandG}
\nu( {\{(z^1, z^2) \in \E: z^1 \le x^1, z^2 \le x^2\} }^c ) = -\log(G(x^1, x^2)).
\end{equation} 
Assuming  $[X^1 > u, X^2 > v ]$ is a rare event, that is, that $u$ and
$v$ are sufficiently large, we use MEVT to
approximate the joint tail probability $P( X^1 > u, X^2 
> v)$  as
\begin{equation}\label{eqn:add1}
 P( X^1 > u, X^2 > v) \approx \frac1{n} \nu\left( \left(\frac{u- b^1_n}{a^1_n}, \infty \right] \times \left(\frac{v- b^2_n}{a^2_n}, \infty \right] \right).
 \end{equation}
However, in the presence of asymptotic independence \cite[page
226]{dehaan:ferreira:2006}, \eqref{eqn:add1}  approximates the joint tail probability
$P( X^1 > u, X^2 > v)$ as zero. Perhaps this approximation is
crude and a better estimate is possible.

If  \eqref{eqn:mevt} holds with $\E = [0, \infty]^2 \setminus \{
(0, 0)\}$, $b^1_n = b^2_n = 0$ and some $a^1_n, a^2_n \uparrow
\infty$, we obtain multivariate regular variation
(MRV). {I}f $X^1$ and $X^2$ are {also} asymptotically
independent, we {may improve the approximation of}
 joint tail probabilities {if} 
hidden regular variation (HRV)  is present; see \citet{resnick:2002a, 
mitra:resnick:2010a,
resnickbook:2007}
and the seminal 
\citet{ledford:tawn:1996, ledford:tawn:1998}. However, HRV requires
the distribution of {$X^1 \wedge X^2$} to have a regularly varying
tail and this may not be the case.  Perhaps
$X^1 \wedge X^2$ has a distribution in some maximal domain of
attraction 
other than the heavy tailed domain. In this case,
HRV cannot be applied to improve
  joint tail probability approximation 
but the deficiency can be remedied by a more general approach
which we call {\it hidden
domain of attraction\/} (HDA). HRV is a special case of HDA. 

If the distribution of ${\bX }$ 
does not have MRV but \eqref{eqn:mevt} still holds,
we may retrieve the MRV setup by transforming the
 components of ${\bX }$ to $\left(U^1(X^1), U^2(X^2)\right)$, where
 $U^i(\cdot) = 1/(1-H^i(\cdot))$ and $H^i(\cdot)$ is the distribution
 of $X^i$, $i =1, 2$ \cite[page 265]{resnickbook:2008}. If  $X^1$ and
 $X^2$ are asymptotically independent, {so are $\left(U^1(X^1),
   U^2(X^2)\right)$ and assuming $U^1(X^1) \wedge U^2(X^2)$ has a
 regularly varying tail, we may seek HRV.} Statistically this is
problematic since we do not know
 $U^i(\cdot)$, $i =1, 2$. This can be dealt with in various ways, none
 of which is completely satisfying or easy and a potential
{advantage of the notion of HDA is
 that it does not require that we transform components.}

 \subsection{Outline} Section \ref{sec:notation} reviews frequently
 used notation. In Section \ref{sec:standard}, we define \HDA for the
 standard case, when both the components of the risk vector have the
 same distribution. Section \ref{sec:nonstandard} deals with HDA in
 the non-standard case, where we 
{drop the identical distribution assumption for $X^1,X^2$.}
In both 
Sections \ref{sec:standard} and \ref{sec:nonstandard},
we  exhibit examples which satisfy our model and discuss estimation 
 procedures of limit measures that appear in the limit relations of
 the model. Section \ref{sec:detectionestimation} discusses the
 detection techniques for HDA and estimation of joint and marginal
 tail probabilities. We conclude with a few remarks in Section
 \ref{sec:conclusion}.

 \subsection{Notation}\label{sec:notation}
{{For simplicity}}, this paper is restricted to two dimensions.
  For  denoting a vector and its components, we use:
$${\bx } = (x^1, x^2), \hskip 1 cm x^i = \hbox{$i$-th component of } {\bx }, \hskip 0.1 cm i = 1, 2.$$
{Multivariate intervals or rectangles are denoted $(\bx,\by],
  [\bx,\by]$, etc where, for instance, $(\bx,\by]= (x^1,y^1]\times (x^2,y^2].$}
The vectors of all zeros, all ones and all infinities are denoted by
$\bzero= (0, 0),$ $\bone = (1, 1)$ and
${\boldsymbol{\infty}} = (\infty, \infty)$
respectively. 
{We write}
$ x^{(1)} = x^1\vee x^2,\qquad  x^{(2)}=x^1\wedge x^2.$
So, the superscripts denote components of a vector and the ordered
component is denoted by a parenthesis in the superscript.

 We express  vague convergence \citep[page 173]{resnickbook:2007} of Radon measures as
$\stackrel{v}{\rightarrow}$ and
weak convergence of probability measures \cite[page
14]{billingsley:1999}
as $\Rightarrow$. Denote a point measure with points  $\{x_i\}$ in a
nice space  $\F$  by 
$\sum_i \epsilon_{x_i}$ where for $x\in \F$ and $B\subset \F$,
$$\epsilon_{x} (B)=\begin{cases}
1,& \text{ if }x \in B,\\
0,& \text{ if }x \in B^c.
\end{cases} $$
Write $M_+(\F)$ for  the set of non-negative Radon
measures on a space $\F$ topologized by the vague topology. 

{For a one dimensional
distribution $F(x)$,  set $\bar F:=1-F$.} The inverse of
a non-decreasing function $\psi (x)$ is $\psi^\leftarrow (x)$.
 
\section{Standard case \HDA}\label{sec:standard}
Suppose that a bivariate random vector ${\bX } = (X^1, X^2) $ with
distribution $H(\bx)$
belongs to the maximal domain of attraction of an extreme value
distribution $G$ \citep[page 265]{resnickbook:2008}, $X^1
\stackrel{d}{=} X^2$  and $X^1$ and $X^2$ are asymptotically
independent so that \eqref{eqn:mevt_original} is satisfied  with
$b^1_n = b^2_n = b_n$, $a^1_n = a^2_n = a_n{>0}$ and $G(x, y) =
G^1(x)G^2(y)$ for $x, y \in \R$, $n \in \mathbb{N}$. Thus,
\eqref{eqn:mevt} becomes  
\begin{align}\label{doabigcone}
nP\left[ \left( \frac{\bX - b_n\bone}{a_n} \right) \in \cdot \, \right] \stackrel{v}{\rightarrow} \nu(\cdot)
\end{align}
in $M_+(\E)$, where $\E = [-\binfty, \binfty] \backslash \{ -\binfty
\}$ or $\E = {[\bzero, \binfty]} \backslash \{ \bzero \}$. Since $G(x,
y) = G^1(x)G^2(y)$ for $x, y \in \R$, the relation of $\nu$ and
$G$ given in \eqref{eqn:nuandG} gives for ${\bx } \in \E$, 
\begin{align}\label{eqn:nustructure}
\nu( {\{{\bf{z}} \in \E: z^1 \le x^1, z^2 \le x^2\} }^c ) =&
-\log G^1(x^1) +  -\log G^2(x^2)\nonumber \\
=& \nu( {\{{\bf{z}} \in \E: z^1 \le x^1\} }^c ) + \nu( {\{{\bf{z}} \in \E: z^2 \le x^2\} }^c ).
\end{align} 
{T}he {standard case contains the}  additional assumption that
$X^1 \stackrel{d}{=} X^2$, which reduces \eqref{eqn:mevt} to
\eqref{doabigcone} and reduces  possible choices for $\E$. The cone
$\E = {[\bzero, \binfty]} \backslash \{ \bzero\}$ is chosen only when $H$
has MRV.

{}From \eqref{doabigcone}, the maximal component of $\bX$ satisfies as
$n\to\infty$, 
\begin{equation}\label{eqn:maxdoa}
nP( X^{(1)} > a_n y + b_n) \to \nu( \{{\bf{z}} \in \E: z^{(1)} > y
\}),\qquad (y, y) \in \E, 
\end{equation}
so $X^{(1)}$ is in a maximal domain of attraction of an extreme value
distribution and the distribution of $X^{(1)}$ characterizes $a_n$ and
$b_n$ given in \eqref{doabigcone}. Using one-dimensional extreme value
theory, we can choose $a_n$ and $b_n$ in such a way that  
\begin{equation}\label{defn:psi}
\psi(y) := \nu( \{{\bf{z}} \in \E: z^{(1)} > y \})
\end{equation}
takes one of the following forms: 
\begin{align*}
\begin{array}{lc}
\psi(y) = \left \{ \begin{array}{cc}
y^{1/\gamma}, & \hbox{if $y > 0$,}\\
0, & \hbox{otherwise,}
\end{array} \right. & \hbox{if $ \gamma > 0,$}\\
 \psi(y) = e^{y}, & \hbox{if $ \gamma = 0,$}\\
 \psi(y) = \left \{ \begin{array}{cc}
\infty, & \hbox{if $y > 0$,}\\
(-y)^{-1/\gamma}, & \hbox{otherwise,}
\end{array} \right. & \hbox{if $ \gamma < 0,$}
\end{array}
\end{align*}
where $\gamma$ is the extreme value index of the distribution of $X^{(1)}$ \cite[page 9]{resnickbook:2008}. This remains our standing assumption for the following discussion.

We define a sub-model of MEVT {called (standard case) hidden domain of
attraction (HDA)}. HDA  helps approximate  joint tail 
probabilities in the presence of asymptotic independence and 
 includes HRV as a
special case. If $\E$ is either $[-\binfty, \binfty] \backslash \{ -\binfty \}$
or $ {[\bzero, \binfty]} \backslash \{ \bzero \}$, define
$\E^0$ as ${(-\binfty, \binfty]}$ or $ (\bzero, \binfty]$ but
see Remark \ref{rem:hdoa}(\ref{rem3:hdoa}) before jumping to erroneous conclusions
that $\E^0\subset \E$ is always true.

\begin{defn}\label{defn:hdoa}
{\rm{The distribution of ${\bX } = (X^1, X^2 )$ has standard case hidden domain of
    attraction  on the cone $\E^0$ if  (i) $X^1 \stackrel{d}{=} X^2$;
(ii)  \eqref{doabigcone} and \eqref{eqn:nustructure} hold;  (iii)  there exist
{positive scaling  and real centering}  constants $\{ c_n  \}$ and $\{d_n \}$ and a
    non-zero measure $\nu^0 \in M_+(\E^0)$ such that in $M_+(\E^0)$,
\begin{align}\label{hiddendoasmallcone}
nP\left[  {(\bX -d_n \bone} )/{c_n}  \in \cdot \, \right] \stackrel{v}{\rightarrow} \nu^0(\cdot)
\qquad (n \to \infty).
\end{align}
}}
\end{defn}

We emphasize that the definition requires that $X^1
\stackrel{d}{=}X^2$ and  that the distribution
  of ${\bX }$ belongs to the maximal domain of 
  attraction of an extreme value product measure  $G$ {with exponent
  measure $\nu$}. Some other remarks:
\begin{rem}\label{rem:hdoa}
{\rm{
\begin{enumerate}
\item  Hidden regular variation assumes
  \eqref{doabigcone} is satisfied 
{on the cone $\E = {[\bzero, \binfty]} \backslash \{ \bzero\}$}
with $b_n = 0$ and $a_n \uparrow
  \infty$ 
  and \eqref{hiddendoasmallcone} is satisfied 
{on the cone $\E^0 = (\bzero, \binfty]$}
with $d_n = 0$ and $c_n
  \uparrow \infty$.  Moreover,
  $a_n/c_n \to \infty $ as $n \to \infty$.  Hidden regular
  variation  is a special case
  of hidden domain of attraction. HRV is the only sub-model of HDA where  the cone $\E^0$ in
  \eqref{hiddendoasmallcone} is $\E^0 = (\bzero, \binfty]$.  

\item \label{rem3:hdoa} From \eqref{hiddendoasmallcone} the minimum
  component of $\bX$ satisfies,
\begin{equation}\label{eqn:mindoa}
nP[ X^{(2)} > c_n y + d_n ] \rightarrow \nu^0((y, \infty] \times (y,
\infty]) \qquad ((y, y) \in \E^0),
\end{equation}
and therefore, the distribution of $X^{(2)}$ belongs to the maximal domain
of attraction of an extreme value distribution \citep[page
4]{dehaan:ferreira:2006}. When HRV exists, the distribution of
$X^{(2)}$ has a regularly varying tail and is hence in 
the domain of attraction of the Fr\'{e}chet distribution. HDA 
{allows} the additional cases where the distribution of
$X^{(2)}$ belongs to the domain of attraction of the Gumbel or the
 Weibull distribution. 

The distribution of $X^{(2)}$ determines the scaling and centering
constants $\{c_n\}$ and $\{d_n\}$ and the cone $\E^0$. As illustrated
by  Example \ref{eg:unif}, even
if $\E = [\bzero, \binfty] \setminus \{ \bzero \}$, the cone $\E^0$ could be
$(-\binfty, \binfty]$ {and} $\E^0$ is not necessarily a sub-cone of
$\E$, as was the case {for} HRV \citep{resnick:2002a}.

\item From \eqref{eqn:maxdoa}, we get that $X^{(1)}$ belongs to the
  maximal domain of attraction of some extreme value
  distribution. Since $X^{(1)} \ge X^{(2)}$, the convergence relation
  \eqref{doabigcone} puts some restriction as to what possible
  convergences can hold in \eqref{hiddendoasmallcone}. For example, if
  \eqref{doabigcone} is satisfied with $X^{(1)}$ being in the Gumbel
  domain of attraction, then HRV can never hold on $\E^0$ since the
  tail of $X^{(2)}$ cannot be heavier than the tail of $X^{(1)}$. 
\end{enumerate}
}}
\end{rem}

\subsection{Semi-parametric structure of
  $\nu^0$} \label{sec:semiparaoflimit} 
The limit measure $\nu^0$ in
\eqref{hiddendoasmallcone} has a semi-parametric structure which
assists estimation (as in \citep{mitra:resnick:2010a} for HRV) and
which characterizes the class of possible limit measures as a class
indexed by a real parameter and a set of probability measures.

 To understand this semi-parametric structure,  proceed as
 follows. Let  $H^{(2)}(\cdot)$ be the distribution of $X^{(2)}$ 
 {and} define the function $\psi^0(\cdot)$ as   
\begin{equation}\label{defn:psi0}
\psi^0(y) := {\left[\nu^0((y, \infty] \times (y, \infty])\right]}^{-1}.
\end{equation}
where $\nu^0(\cdot)$ is given in \eqref{hiddendoasmallcone}. From
\eqref{eqn:mindoa} we get  
\begin{align}\label{eqn: Hconv}
n \overline {H^{(2)}}(c_n y + d_n) \rightarrow
{[\psi^0(y)]}^{-1}\qquad (y \in \R).
\end{align}
 Hence, from the one-dimensional extreme value theory, 
$H^{(2)}$ is in a maximal domain of attraction.  Let $\gamma^0$ be the extreme value
index {of $H^{(2)}$}.  Assuming $c_n$ and $d_n$ are chosen suitably \citep[page
9]{resnickbook:2008}, $\psi^0(\cdot)$ must take  one of the following
three forms: 
\begin{align}\label{eqn:formsofpsi}
\begin{array}{lc}
 \psi^0(y) = \left \{ \begin{array}{cc}
y^{1/\gamma^0}, & \hbox{if $y >0$,}\\
0, & \hbox{otherwise,}
\end{array} \right. & \hbox{if $ \gamma^0 > 0,$}\\
 \psi^0(y) = e^{y}, & \hbox{if $ \gamma^0 = 0,$}\\
 \psi^0(y) = \left \{ \begin{array}{cc}
\infty, & \hbox{if $y > 0$,}\\
(-y)^{-1/\gamma^0}, & \hbox{otherwise,}
\end{array} \right. & \hbox{if $ \gamma^0 < 0.$}
\end{array}
\end{align}
Henceforth  assume that 
 $c_n$ and $d_n$ are chosen so that \eqref{eqn:formsofpsi} is true. Define  
\begin{equation}\label{defn:U}
U^{(2)}(x) = 1/(1 - H^{(2)}(x)),
\end{equation}
{and the} following {helps us
identify the semi-parametric structure of $\nu^0$. }

\begin{prop}\label{prop:equihrv}
{The convergence in \eqref{hiddendoasmallcone} that defines HDA is
  equivalent to the regular variation on $(\bzero, \binfty]$, 
\begin{equation}\label{equihrv}
nP\left[ n^{-1}\left( {U^{(2)}(X^1)}, {U^{(2)}(X^2)} \right)
  \in \cdot \,\right] \stackrel{v}{\rightarrow} \tilde \nu^0(\cdot) 
\end{equation}
where $U^{(2)}(\cdot)$ is defined in
\eqref{defn:U} and $\tilde \nu^0(\cdot)$ is a Radon measure on $(\bzero,
\binfty]$ that is related to the
limit measure  $\nu^0(\cdot)$  in \eqref{hiddendoasmallcone} by
\begin{equation}\label{nu0andtildenu0} 
\tilde \nu^0((x^1, \infty] \times (x^2, \infty] ) = 
\nu^0 \Bigl(
\bigl({(\psi^0 )}^{\leftarrow}(x^1), \infty \bigr]
  \times \bigl({(\psi^0 )}^{\leftarrow}(x^2), \infty
  \bigr] \Bigr), \qquad (\bx \in (\bzero, \binfty]).
\end{equation}
The measure $\tilde \nu^0(\cdot)$ satisfies the scaling property:
\begin{align}\label{scaling_nu0tilde}
\tilde \nu^0( c \cdot) = c^{-1} \tilde \nu^0( \cdot), \hskip 1 cm c > 0.
\end{align}
}
\end{prop}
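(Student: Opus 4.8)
The plan is to show that \eqref{hiddendoasmallcone} and \eqref{equihrv} are each the image of the other under the marginal transformation $x\mapsto U^{(2)}(x)/n$, or equivalently $x\mapsto\psi^0(y)$ with $y=(x-d_n)/c_n$, using the fact from one-dimensional extreme value theory that convergence of the transformed minimum component (equation \eqref{eqn: Hconv}) gives that $U^{(2)}(c_n\cdot+d_n)/n$ converges to $\psi^0(\cdot)$ in an appropriate sense. First I would record the key deterministic convergence: since $H^{(2)}$ is in the maximal domain of attraction with index $\gamma^0$ and $\{c_n\},\{d_n\}$ are normalized so that \eqref{eqn:formsofpsi} holds, we have $U^{(2)}(c_n y + d_n)/n \to \psi^0(y)$ uniformly on compact subsets of $\{y:\psi^0(y)>0\}$, and correspondingly the inverse relation $(U^{(2)})^{\leftarrow}(nx) \approx c_n (\psi^0)^{\leftarrow}(x) + d_n$ holds locally uniformly for $x>0$. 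This is the standard ``convergence to types / inverse'' machinery; it is where the assumption that the scaling has been chosen to produce the canonical forms \eqref{eqn:formsofpsi} gets used.

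Next I would prove the forward implication $\eqref{hiddendoasmallcone}\Rightarrow\eqref{equihrv}$. Both limit measures are Radon on a cone and determined by their values on ``upper orthants'' $(x^1,\infty]\times(x^2,\infty]$, so by the usual criterion for vague convergence (convergence on a convergence-determining class of relatively compact rectangles whose boundaries are null sets) it suffices to check, for $\bx\in(\bzero,\binfty]$ with $\bx$ a continuity point,
\begin{equation*}
nP\Bigl[U^{(2)}(X^1)>nx^1,\;U^{(2)}(X^2)>nx^2\Bigr]\ \longrightarrow\ \nu^0\Bigl(\bigl((\psi^0)^{\leftarrow}(x^1),\infty\bigr]\times\bigl((\psi^0)^{\leftarrow}(x^2),\infty\bigr]\Bigr).
\end{equation*}
The left side equals $nP[X^1 > (U^{(2)})^{\leftarrow}(nx^1),\,X^2>(U^{(2)})^{\leftarrow}(nx^2)]$ (modulo monotonicity/continuity subtleties at the jump points of $U^{(2)}$), which by the inverse asymptotics above is $nP[(\bX-d_n\bone)/c_n \in ((\psi^0)^{\leftarrow}(x^1),\infty]\times((\psi^0)^{\leftarrow}(x^2),\infty]]$ up to a vanishing error, and \eqref{hiddendoasmallcone} identifies the limit as the right side, provided the rectangle $((\psi^0)^{\leftarrow}(x^1),\infty]\times((\psi^0)^{\leftarrow}(x^2),\infty]$ is a $\nu^0$-continuity set — which holds for all but countably many $\bx$. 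Defining $\tilde\nu^0$ by \eqref{nu0andtildenu0} then makes this exactly the statement \eqref{equihrv}, and one checks $\tilde\nu^0$ is non-zero and Radon on $(\bzero,\binfty]$ because $\nu^0$ is. The reverse implication $\eqref{equihrv}\Rightarrow\eqref{hiddendoasmallcone}$ runs the same argument with the roles of $U^{(2)}$ and $(U^{(2)})^{\leftarrow}$ interchanged, applying the direct asymptotics $U^{(2)}(c_ny+d_n)/n\to\psi^0(y)$ to push the regular variation on $(\bzero,\binfty]$ back to the cone $\E^0$; here one uses that $(\psi^0)^{\leftarrow}$ is (essentially) a continuous bijection onto its range so continuity sets are preserved, and that $\tilde\nu^0$ puts no mass on the ``boundary'' excluded when passing back to $\E^0$.

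Finally, the scaling property \eqref{scaling_nu0tilde} is immediate once \eqref{equihrv} is established: for $c>0$ replace $n$ by $\lfloor nc\rfloor$ in \eqref{equihrv}, use $U^{(2)}(X^i)/\lfloor nc\rfloor = (1/c)\cdot U^{(2)}(X^i)/n + o(1)$ and $\lfloor nc\rfloor/n\to c$, and compare limits along the two scalings to get $\tilde\nu^0(c\,\cdot)=c^{-1}\tilde\nu^0(\cdot)$; alternatively this is just the homogeneity inherited from the fact that $U^{(2)}$ has a $1$-regularly-varying tail in the standardized scale, exactly as in the HRV case treated in \citet{mitra:resnick:2010a}. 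The main obstacle I anticipate is the careful bookkeeping around the generalized inverses: $U^{(2)}$ and $(\psi^0)^{\leftarrow}$ need not be continuous or strictly monotone, so equalities like ``$U^{(2)}(X)>nx \iff X>(U^{(2)})^{\leftarrow}(nx)$'' hold only up to events of probability $o(1/n)$ and only at continuity points of the relevant limits — handling these exceptional sets (and restricting to the dense set of ``good'' $\bx$ on which all rectangles involved are continuity sets of both $\nu^0$ and $\tilde\nu^0$) is the delicate part, though entirely routine in this literature.
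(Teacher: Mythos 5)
Your proposal is correct and follows essentially the same route as the paper's proof: both reduce the equivalence to the marginal convergence \eqref{eqn: Hconv} (i.e., $U^{(2)}(c_ny+d_n)/n\to\psi^0(y)$), check convergence on upper orthants, and identify the limits via \eqref{nu0andtildenu0}. The only differences are cosmetic --- the paper writes out the direction \eqref{equihrv}$\Rightarrow$\eqref{hiddendoasmallcone} using the direct map $U^{(2)}$ and omits the converse, whereas you detail the converse via generalized inverses, and you additionally sketch the standard $\lfloor nc\rfloor$ argument for the scaling property \eqref{scaling_nu0tilde}, which the paper states without proof.
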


\begin{rem}
{\rm{ 
\begin{enumerate}[(i)]
\item {Proposition \ref{prop:decomp} below shows} 
that the limit measure $\tilde \nu^0$ is
 determined by a probability measure $S^0$. Thus
the family of limits in \eqref{equihrv} is indexed by probability
measures and  Proposition \ref{prop:equihrv} shows that $\nu^0$ has semi-parametric structure:  the
  probability measure $S^0$ 
 determine{s} $\tilde \nu^0$ and given  $\gamma^0$,
  we  {get}  $\psi^0(\cdot)$ from
  \eqref{eqn:formsofpsi} and then 
applying \eqref{nu0andtildenu0}, we get
$ \nu^0$. 

\item {I}f the support of the distribution of $X^{(2)}$ is smaller
  than that of $X^i$, $i = 1, 2$, then $U^{(2)}(X^i)$ could take the
  value $\infty$ with positive probability. Hence, in the following
  discussion, we treat $U^{(2)}(X^i)$, $i = 1, 2$, as extended random
  variables.
\end{enumerate}
}}
\end{rem}

\begin{proof}[Proof of Proposition \ref{prop:equihrv}] {To see
    that}
 \eqref{equihrv} implies \eqref{hiddendoasmallcone}, observe that for ${\bx } \in \E^0$,
\begin{align*}
nP\left[ \frac{X^1 - d_n}{c_n} > x^1, \right. & \left. \frac{X^2- {d}_n}{{c}_n} >  x^2 \right] \\
&= nP\left[  \frac{U^{(2)}(X^1)}{n} > \frac{U^{(2)}(c_n x^1 + d_n)}{n}, \frac{U^{(2)}(X^2)}{n} > \frac{U^{(2)}(c_n x^2 + d_n)}{n}\right]\\
& \rightarrow \tilde \nu^0 \left( \left(\psi^0(x^1), \infty \right] \times \left(\psi^0(x^2), \infty \right] \right) = \nu^0( (x^1, \infty] \times (x^2, \infty]),
\end{align*}
where the convergence follows from \eqref{eqn: Hconv} and
\eqref{equihrv} and the last equality follows from
\eqref{nu0andtildenu0} and the forms of $\psi^0(\cdot)$ given in
\eqref{eqn:formsofpsi}. Hence, 
\eqref{hiddendoasmallcone} holds. 
The {converse} is similar and  is omitted.
\end{proof}

The scaling property \eqref{scaling_nu0tilde} implies that 
we can express \eqref{equihrv} in an alternate coordinate system that
transforms the limit measure into a product.
{}From Proposition \ref{prop:equihrv}, if the
distribution of ${\bX }$ satisfies Definition \ref{defn:hdoa} and  has HDA, then
$(U^{(2)}(X^1), U^{(2)}(X^2))$ has regular variation on $(\bzero,
\binfty]$. So, using \eqref{defn:psi0}, \eqref{eqn:formsofpsi} and
\eqref{nu0andtildenu0}, we get that 
\begin{align*}
\tilde \nu^0 \left((\bone, \binfty]  \right) =  \nu^0 \left({\left( {\left(\psi^0 \right)}^{\leftarrow}(1), \infty \right]}^2 \right) = {[\psi^0({\left(\psi^0 \right)}^{\leftarrow}(1)) ]}^{-1} = 1.
\end{align*}
This plus the scaling property \eqref{scaling_nu0tilde}
implies $\tilde \nu^0 ([\bone, \binfty]  ) =
1$. {This, Proposition 3.1 of  \cite{mitra:resnick:2010a} and
Proposition \ref{prop:equihrv} yield  the equivalent convergence in alternate coordinates
  given in \eqref{eqn:decomp}
  below.

For Proposition \ref{prop:decomp}, we need the following:
let $\nu_1$ be a Pareto
measure on $(0, \infty]$  satisfying $\nu_1((y, \infty]) = y^{-1}$ for
$y > 0$.  Since  $U^{(2)}(\cdot)$ is non-decreasing, $U^{(2)}(
    X^{(2)}) = U^{(2)}(X^1) \wedge U^{(2)}(X^2)$. Set
$$\delta \aleph^{(2)} =  \{ {\bx }  \in (0, \infty]^2 :
x^{(2)} = 1\}.$$  }

\begin{prop}\label{prop:decomp}
{{ The convergence in \eqref{equihrv} is equivalent to
\begin{equation}\label{eqn:decomp}
nP\left[ \left( \frac{U^{(2)}\left(X^{(2)}\right)}{n} , \left(
      \frac{U^{(2)}(X^1)}{U^{(2)}(X^{(2)})},
      \frac{U^{(2)}(X^2)}{U^{(2)}(X^{(2)})} \right) \right) \in \cdot \,
\right] \stackrel{v}{\rightarrow} \nu_1 \times  S^0(\cdot), 
\end{equation}
on $(0, \infty] \times \delta \aleph^{(2)}$, where  $S^0$ is a probability measure on $\delta
\aleph^{(2)}$. The relation between $\tilde \nu^0$  in
\eqref{equihrv} and $S^0$ is 
 \begin{equation}\label{tildenu0ands0}
 \tilde \nu^0 \left( \left \{ {\bx } \in (0, \infty]^2 : x^{(2)} \ge r, {\bx }/x^{(2)} \in \Lambda \right \} \right) = r^{-1} S^0(\Lambda),
 \end{equation}
for  $r >0$ and Borel sets $\Lambda \subset \delta \aleph^{(2)}$.
}}
\end{prop}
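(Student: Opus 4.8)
The plan is to realize \eqref{eqn:decomp} as the image of \eqref{equihrv} under the generalized polar-coordinate map and to let the scaling property \eqref{scaling_nu0tilde} split the limit into a product. Define $T\colon(\bzero,\binfty]\setminus\{\binfty\}\to(0,\infty]\times\delta\aleph^{(2)}$ by $T(\bx)=\bigl(x^{(2)},\,\bx/x^{(2)}\bigr)$, with inverse $T^{-1}(r,\ba)=r\ba$. One checks that $T$ is a homeomorphism and, what matters for vague convergence, that $T$ and $T^{-1}$ carry relatively compact sets to relatively compact sets (e.g.\ $T([r,\infty]^2)=[r,\infty]\times\delta\aleph^{(2)}$); hence $T$ maps Radon measures on $(\bzero,\binfty]$ to Radon measures on $(0,\infty]\times\delta\aleph^{(2)}$ and preserves vague convergence. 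Because $U^{(2)}$ is non-decreasing, $U^{(2)}(X^{(2)})=U^{(2)}(X^1)\wedge U^{(2)}(X^2)$, so applying $T$ to the random vector $n^{-1}\bigl(U^{(2)}(X^1),U^{(2)}(X^2)\bigr)$ of \eqref{equihrv} produces exactly the random element in \eqref{eqn:decomp}. By the continuous mapping theorem for vague convergence, \eqref{equihrv} (with limit $\tilde\nu^0$) is then equivalent to \eqref{eqn:decomp} with limit $T(\tilde\nu^0)$, and it remains only to identify $T(\tilde\nu^0)$ as a product $\nu_1\times S^0$ with $S^0$ a probability measure.

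To identify $T(\tilde\nu^0)$, set $S^0(\Lambda):=\tilde\nu^0\bigl(\{\bx\in(0,\infty]^2:\ x^{(2)}\ge 1,\ \bx/x^{(2)}\in\Lambda\}\bigr)$ for Borel $\Lambda\subset\delta\aleph^{(2)}$. This is a finite measure, and by the mass computation preceding the proposition $S^0(\delta\aleph^{(2)})=\tilde\nu^0([\bone,\binfty])=1$, so $S^0$ is a probability measure. For $r>0$ the $T$-preimage of the rectangle $(r,\infty]\times\Lambda$ is $\{\bx:\ x^{(2)}>r,\ \bx/x^{(2)}\in\Lambda\}=r\cdot\{\bx:\ x^{(2)}>1,\ \bx/x^{(2)}\in\Lambda\}$, so \eqref{scaling_nu0tilde} yields
\[
T(\tilde\nu^0)\bigl((r,\infty]\times\Lambda\bigr)=\tilde\nu^0\bigl(r\cdot\{x^{(2)}>1,\ \bx/x^{(2)}\in\Lambda\}\bigr)=r^{-1}S^0(\Lambda)=\nu_1\bigl((r,\infty]\bigr)\,S^0(\Lambda).
\]
Since the sets $(r,\infty]\times\Lambda$ form a $\pi$-system generating the Borel $\sigma$-field of $(0,\infty]\times\delta\aleph^{(2)}$, a monotone-class argument upgrades this to $T(\tilde\nu^0)=\nu_1\times S^0$ on all Borel sets; and since $\nu_1$ is non-atomic (equivalently, scaling forces $\tilde\nu^0(\{x^{(2)}=r\})=0$), one may replace ``$x^{(2)}>r$'' by ``$x^{(2)}\ge r$'', which is exactly \eqref{tildenu0ands0}. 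This is the precise analogue, in the $\wedge$-polar coordinates, of the HRV decomposition; alternatively one may simply invoke Proposition~3.1 of \cite{mitra:resnick:2010a} together with Proposition~\ref{prop:equihrv}.

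The main obstacle is the topological bookkeeping behind the continuous-mapping step: one must verify that $T$ is proper between the cones $(\bzero,\binfty]$ and $(0,\infty]\times\delta\aleph^{(2)}$, i.e.\ that a set stays bounded away from the deleted boundary of one cone exactly when its image does in the other, so that vague convergence really is transported in both directions. Once this is in place, the product identification is essentially forced by \eqref{scaling_nu0tilde}; the only delicate point there is the normalization $S^0(\delta\aleph^{(2)})=1$, which rests on $\tilde\nu^0([\bone,\binfty])=1$, together with the harmless fact that the single point $\binfty$ (and each ``sphere'' $\{x^{(2)}=r\}$) is $\tilde\nu^0$-null by scaling, so discarding $\binfty$ from the domain of $T$ costs nothing.
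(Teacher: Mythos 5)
Your argument is correct and is essentially the paper's own route: the paper establishes $\tilde\nu^0([\bone,\binfty])=1$ and then invokes Proposition~3.1 of \cite{mitra:resnick:2010a} together with Proposition~\ref{prop:equihrv}, and that cited result is exactly the polar-coordinate decomposition via $\bx\mapsto(x^{(2)},\bx/x^{(2)})$ and the scaling property \eqref{scaling_nu0tilde} that you have written out in full. Your explicit handling of the properness of the map, the null sets $\{\binfty\}$ and $\{x^{(2)}=r\}$, and the $\pi$-system identification of the product measure is sound and simply makes the citation self-contained.
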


We call the probability measure $S^0$ {\it standardized
hidden spectral measure\/}.

HRV is a special case of HDA. For  HRV, a similar spectral
measure $S^{(2)}(\cdot)$ is defined \citep{mitra:resnick:2010a} 
on $\delta \aleph^{(2)}$
called {the} hidden spectral measure by
\begin{align*}
P\left[ {{\bX }}/{X^{(2)}} \in \cdot \,\Big{|} X^{(2)} > t \right]
\Rightarrow S^{(2)}(\cdot) \qquad {(t\to\infty).}
\end{align*}
 {The standardized hidden spectral
  measure $S^0(\cdot)$ on $\delta \aleph^{(2)}$ given in 
 \eqref{eqn:decomp} is}
\begin{align}\label{eqn:s0interpret}
P\left[  \frac{ \left(U^{(2)}(X^1), U^{(2)}(X^2)
    \right)}{U^{(2)}(X^{(2)})} \in \cdot \, \Big{|} X^{(2)} > t \right]
\Rightarrow S^0(\cdot),
\end{align}
where the convergence holds as $t \to
x_{H^{(2)}} = \sup \{ y \in \R: H^{(2)}(y) < 1 \} $.  If HRV exists,
$x_{H^{(2)}} = \infty$. 

\subsection{Examples}\label{subsec:egs}
{We give} examples of distributions that  possess
\MRV {with asymptotic independence. Each has \HDA
 but not hidden regular variation  emphasizing the need for
{a concept beyond HRV.}

\begin{ex}
{\rm{ Suppose, $W_1, W_2 \stackrel{iid}{\sim}  F(\cdot)$ and $Z_1, Z_2
    \stackrel{iid}{\sim} D(\cdot)$, where $F$ and $D$ belong to the
    maximal domain{s} of attraction of {the}  Fr\'{e}chet with
    index $\alpha =1$ and  Gumbel
    distribution{s} respectively.  Let $B$ be a Bernoulli random variable
    such that $P[ B = 1] = 0.5 = 1- P[ B = 0]$.   {A}ssume  the
    random variables $W_1, W_2, Z_1, Z_2$ and $B$ are mutually
    independent {and} define a bivariate random vector ${\bX }$ as 
$$ {\bX } = (X^1, X^2) = B(W_1, Z_1) + (1- B)(Z_2, W_2).$$
We show that the distribution of ${\bX }$ has MRV.
It {suffices} \citep{dehaan:1978}
to  verify that
$t_1X^1 \vee t_2X^2$ has a {regularly varying tail for any $t_1,
  t_2 > 0$.}  {Since $F$ has a regulary varying tail and}
\begin{equation*}
P[ t_1W_1 \vee t_2Z_1 > x] {\sim} P[t_1W_1 > x], \qquad (x \to
\infty),
\end{equation*}
$t_1X^1 \vee t_2X^2$ also has a regularly varying tail. Thus for
{appropriate $a_n \uparrow \infty$,}
\begin{align*}
nP\left[ {{\bX }}/{a_n} \in \cdot \, \right] \stackrel{v}{\rightarrow} \nu(\cdot)
\end{align*}
on $\E = [0, \infty]^2 \setminus \{{\bzero } \}$, where $\nu\left(
  {\left( [0, x^1] \times [0, x^2] \right)}^c \right) =
\frac{1}{2}\left({\left(x^1\right)}^{-1} +
  {\left(x^2\right)}^{-1}\right).$ Therefore, the distribution of
${\bX }$ has MRV with asymptotic independence on $\E$. 

Furthermore,  $X^{(2)}$ belongs to the maximal domain of
attraction of {a} Gumbel distribution and therefore,  HRV does not exist.
To see this,  without loss of generality
\citep{balkema:dehaan:1972, resnickbook:2008}, assume that $D$ is a
von-Mises function \citep[page 40]{resnickbook:2008} and for
specificity assume the right endpoint of $D$ is infinite. The form of
the tail is 
\begin{equation*}
\overline D (x) = c e^{- \int_1^x {\left[f_D(y) \right]}^{-1}dy} \quad
\text{and} \quad f_D'(x) \stackrel{x\to\infty}{\to} 0,
\end{equation*}
where $c>0$ is some constant. Likewise, assume without loss of generality
\citep[page 58]{resnickbook:2008} that $F$ satisfies $xF'(x)/\bar F(x)
\to 1$, where $1$ is the index of regular variation of $\bar F$.
Then  $\overline {H^{(2)}}(x) := \overline F(x) \overline D(x)$ is
{the tail of a} 
von-Mises function with auxiliary function  
\begin{equation*}
f_{H^{(2)}}(t) = tf_D(t)/(t + 1\cdot f_D(t)) \stackrel{t \to \infty}{\sim} f_D(t),
\end{equation*}
since
\begin{align*}
\frac1{f_{H^{(2)}}(t)} = \frac{{H^{(2)}}'(t)}{\overline {H^{(2)}}(t)} = \frac{F'(t)}{\overline F(t)} + \frac{D'(t)}{\overline D(t)} \stackrel{t \to \infty}{\sim}  t^{-1} + \frac1{f_D(t)}.
\end{align*}
From \cite[Corollary 1.7, page 46]{resnickbook:2008}  $H^{(2)}$
belongs to the maximal domain of attraction of {the} 
Gumbel
distribution.

Next we choose scaling and centering constants $\{ c_n \}$ and $\{ d_n
\}$ in \eqref{eqn: Hconv} so that $\psi^0(\cdot) = e^y$ (one of the
forms in \eqref{eqn:formsofpsi}). {The usual choices are \citep[page 40]{resnickbook:2008}}
$d_n ={ \left(1/{\overline {H^{(2)}}} \right)}^{\leftarrow}(n)$ and $c_n = f_{H^{(2)}}(d_n)$.

Now, observe that for ${\bx } \in \E^0 = (-\infty, \infty]^2$, as $n \to \infty$,
\begin{align*}
nP[ X^1 >& d_n+ c_nx^1,  X^2 > d_n + c_nx^2]  \\
&= \frac{n}{2}\overline F\left(d_n+ c_nx^1 \right)\overline D
\left(d_n+ c_nx^2 \right) + \frac{n}{2}\overline F\left(d_n+ c_nx^2
\right)\overline D \left(d_n+ c_nx^1 \right)\\ 
&= \frac{n}{2}\left( \frac{\overline F\left(d_n+ c_nx^1 \right)}{
    \overline F\left(d_n+ c_nx^2 \right)} \right)\overline {H^{(2)}}
\left(d_n+ c_nx^2 \right) 
+  \frac{n}{2}\left( \frac{\overline F\left(d_n+ c_nx^2
    \right)}{ \overline F\left(d_n+ c_nx^1 \right)} \right)\overline
{H^{(2)}} \left(d_n+ c_nx^1 \right) \\ 
& \rightarrow \frac{1}{2}( e^{-x^1} + e^{-x^2} ).
\end{align*}
 The convergence follows from the facts that $\overline F$ is regularly varying, $c_n/d_n \to 0$ and \eqref{eqn: Hconv} holds with $\psi^0(y) = e^y$. 
Therefore, as in Definition \ref{defn:hdoa}, the distribution of ${\bX }$  has HDA  on $\E^0 = (-\infty, \infty]^2$ with limit measure $\nu^0$ such that for ${\bx } = (x^1, x^2) \in \E^0$,
$$ \nu^0\left((x^1, \infty] \times (x^2, \infty] \right) = \frac{1}{2}( e^{-x^1} + e^{-x^2} ).$$

Thus the distribution of  ${\bX }$  is regularly varying on $\E$, has
HDA on $\E^0$, but does not have HRV since HRV
requires the distribution of $X^{(2)}$ to be in the domain of
attraction of the Fr\'{e}chet distribution \citep[page
54]{resnickbook:2008}. 
}}
\end{ex}

\begin{ex}\label{eg:unif}
{\rm{ 
Suppose, $U \sim$ Uniform($[0, 1]$). Define the random vector ${\bX }$ as 
$${\bX } = (X^1, X^2) = \left( \frac{1}{U}, \frac{1}{1-U} \right).$$
Now, note that for $x^1, x^2 > 0$, $2n > {(x^1)}^{-1} + {(x^2)}^{-1},$
\begin{align*}
n(1- P[ 1/U  & \le  2nx^1, 1/(1-U)  \le 2nx^2])
= n(1- P[ U \ge {(2nx^1)}^{-1}, U \le 1 - {(2nx^2)}^{-1}]) \\
&= n(1- (1 - {(2nx^2)}^{-1} - {(2nx^1)}^{-1}))
 \rightarrow \frac{1}{2}( {(x^1)}^{-1} + {(x^2)}^{-1} ),
\end{align*}
as $n \to \infty$. Therefore, on $\E = [0, \infty]^2 \setminus \{\bzero\}$,
$nP\left[ {{\bX }}/{2n} \in \cdot \right] \stackrel{v}{\rightarrow} \nu(\cdot)$
 where the limit measure $\nu$ satisfies
$\nu \left( {\left( [0, x^1] \times [0, x^2] \right)}^c \right) =
({(x^1)}^{-1} +
  {(x^2)}^{-1})/2,$ for ${\bx } \in \E$ and thus
 the distribution of ${\bX }$ has MRV with asymptotic independence.

Also note that for $\{ (x^1, x^2) \in (-\infty, \infty]^2: x^1 + x^2 \le 0\}$, and large $n$,
\begin{align}\label{ex2eqn:hdoa} 
nP\Bigl[ X^1 &> 2 + \frac{2x^1}{n+1}, X^2 >  2 + \frac{2x^2}{n+1}
\Bigr]  = nP\Bigl[ U < \frac{n+1}{2n + 2 + 2x^1}, U  >  \frac{n + 1 +
  2x^2}{2n+2 +2x^2} \Bigr] \nonumber \\ 
&= n \left(\frac{n+1}{2n + 2 + 2x^1} - \frac{n + 1 + 2x^2}{2n+2+ 2x^2} \right) 
= n \left( \frac{1}{2} - \frac{x^1}{2n + 2 + 2x^1} - \frac{1}{2} - \frac{x^2}{2n+2+2x^2} \right)\nonumber \\
& \rightarrow \frac{1}{2}( (-x^1) + (-x^2) ),
\end{align}
as $n \to \infty$. Similar calculations show that for $\{ (x^1, x^2) \in (-\infty, \infty]^2: x^1 + x^2 > 0\}$, 
\begin{align}\label{ex2eqn:hdoa2}
nP\left[ X^1 > 2 + \frac{2x^1}{n+1}, X^2 >  2 + \frac{2x^2}{n+1} \right] \rightarrow 0,
\end{align}
as $n \to \infty$. Therefore, the distribution of ${\bX }$ has HDA as in Definition \ref{defn:hdoa} on $\E^0 = (-\infty, \infty]^2$ with limit measure $\nu^0$ such that for ${\bx } \in \E^0$,
$$ \nu^0\left((x^1, \infty] \times (x^2, \infty] \right) = \left \{ \begin{array}{lr} \frac{1}{2}( (-x^1) + (-x^2) ), & \text{if }  x^1 + x^2 \le 0\\
0 & \text{otherwise.} \end{array} \right.$$
 From \eqref{ex2eqn:hdoa} and \eqref{ex2eqn:hdoa2} it also follows
 that $X^{(2)}$ belongs to the domain of attraction of the reversed
 Weibull distribution \cite[page 59]{resnickbook:2008}. So, the
 distribution of ${\bX }$ has HDA, but does not have HRV and
 furthermore, $\E^0$ is not a subset of $\E$.
}}
\end{ex}

\subsection{Estimation}\label{sec:stanestimation}
Because the marginal distributions are assumed to be the same, the standard case is somewhat unrealistic for applications but it is
important to understand estimation for this case before moving on to 
more realistic scenarios.

To estimate joint tail probabilities, we  first estimate the
limit measure $\nu^0$ given in \eqref{hiddendoasmallcone}. Let, $\{
{\bX }, {\bX }_i, i = 1, 2, \cdots, n \}$ be iid {with a common
  distribution satisfying  \eqref{hiddendoasmallcone}.}
From
\eqref{hiddendoasmallcone}, 
\begin{align}\label{eqn:nu0estimation1}
\frac{1}{k} \sum_{i=1}^n \epsilon_{\left( \frac{\bX_i -
      d(n/k)\bone}{c(n/k)}\right)}(\cdot)
\Rightarrow \nu^0(\cdot)  \qquad (k \to \infty, n/k \to
\infty),
\end{align}
{i}n $M_+(\E^0)$ \citep[page 139]{resnickbook:2007}. From
\eqref{eqn:mindoa},
the distribution of $X^{(2)}$ {determines} $c_n$ and $d_n$. 
The iid data $\{ X^{(2)}_i: i = 1, 2, \cdots, n \}$ {allow}
estimates  (\citep{dehaan:ferreira:2006}, \citep[page 93]{resnickbook:2007}) 
of $c(n/k)$ and $d(n/k)$, denoted by $\hat c(n/k)$ and $\hat
d(n/k)$, satisfying
\begin{align}\label{eqn:an0bn0 convergence}
\frac{c(n/k)}{\hat c(n/k)} \stackrel{P}{\rightarrow} 1, \hskip 1 cm \frac{d(n/k) - \hat d(n/k)}{c(n/k)} \stackrel{P}{\rightarrow} 0;
\end{align}
Therefore, we get the joint convergence
\begin{align}\label{eqn:nu0estimation2}
\left( \frac{1}{k} \sum_{i=1}^n \epsilon_{\left( \frac{\bX_i -
        d(n/k)\bone}{c(n/k)}\right)},
  \frac{d(n/k) - \hat d(n/k)}{c(n/k)}, \frac{c(n/k)}{\hat c(n/k)}
\right) \Rightarrow \left( \nu^0(\cdot), 0, 1 \right)  
\end{align}
{i}n $M_+(\E^0) \times \R^2$. {Apply} the almost surely
continuous map $(\nu(\cdot), b, a) \mapsto \nu(a [ (\cdot) +
b{\bone}] )$ in \eqref{eqn:nu0estimation2}  {and} we get the following proposition:

\begin{prop}\label{prop:nonparametric estimation} 
{{ Let, $\{ {\bX }, {\bX }_i, i \geq 1\}$ be iid with common distribution satisfying
    \eqref{hiddendoasmallcone}. Then, 
\begin{align}\label{eqn:n0estimate}
\widehat{ \nu_n^0 }(\cdot):= \frac{1}{k} \sum_{i=1}^n \epsilon_{\left( \frac{\bX_i - \hat
      d(n/k)\bone}{\hat c(n/k)}\right)}(\cdot) \Rightarrow \nu^0(\cdot) \qquad (k \to
\infty, n/k \to     \infty),
\end{align}
{i}n $M_+(\E^0)$.
}}
\end{prop}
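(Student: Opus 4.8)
The plan is to run a continuous-mapping argument on top of the joint convergence already assembled in \eqref{eqn:nu0estimation2}. First I would note that the scaling--centering estimators $\hat c(n/k),\hat d(n/k)$ are built only from the iid sample $\{X^{(2)}_i\}$ of the minimum components, so by \eqref{eqn:mindoa} and standard one-dimensional extreme value estimation (as cited) they satisfy \eqref{eqn:an0bn0 convergence}. Combining this with the point-process convergence \eqref{eqn:nu0estimation1}, which is the empirical version of \eqref{hiddendoasmallcone}, gives the joint weak convergence \eqref{eqn:nu0estimation2} in the product space $M_+(\E^0)\times\R^2$. The point here is that the marginal convergences hold and we only need joint convergence to a limit whose last two coordinates are the \emph{constants} $0$ and $1$; convergence in probability of those coordinates to constants automatically upgrades the marginal convergences to joint convergence (Slutsky in a metric space), so no extra dependence structure needs to be controlled.

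Next I would introduce the map $T\colon M_+(\E^0)\times\R^2\to M_+(\E^0)$ defined by $T(\mu,b,a)(\cdot)=\mu\bigl(a[(\cdot)+b\bone]\bigr)$, i.e. pushing $\mu$ forward under the affine map $\bx\mapsto \bx/a - b\bone$ (up to the obvious bookkeeping of which direction the rescaling goes). Applying $T$ to the empirical measure $\frac1k\sum_i\epsilon_{(\bX_i-d(n/k)\bone)/c(n/k)}$ with $b = (d(n/k)-\hat d(n/k))/c(n/k)$ and $a = c(n/k)/\hat c(n/k)$ produces exactly $\frac1k\sum_i\epsilon_{(\bX_i-\hat d(n/k)\bone)/\hat c(n/k)}=\widehat{\nu_n^0}$, because the composition of the two affine normalizations collapses to the single normalization by $\hat c(n/k),\hat d(n/k)$. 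So if $T$ is continuous at all points $(\mu,0,1)$ with $\mu\in M_+(\E^0)$, the continuous-mapping theorem applied to \eqref{eqn:nu0estimation2} yields $\widehat{\nu_n^0}\Rightarrow T(\nu^0,0,1)=\nu^0(\cdot)$ in $M_+(\E^0)$, which is the claim.

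The main obstacle, and the only genuinely nontrivial point, is verifying the claimed continuity of $T$ at $(\mu,0,1)$ in the vague topology on $M_+(\E^0)$. Vague convergence $\mu_m\stackrel{v}{\to}\mu$ plus $(b_m,a_m)\to(0,1)$ does \emph{not} in general imply $T(\mu_m,b_m,a_m)\stackrel{v}{\to}\mu$ without some care, because an affine perturbation of the argument can move mass across the boundary of $\E^0$ (e.g. toward $-\binfty$ or $\bzero$), and vague convergence only tests against continuous functions with compact support in $\E^0$. The way I would handle this: fix $f\in C_c^+(\E^0)$ with compact support $K\subset\E^0$; for $(b,a)$ close enough to $(0,1)$ the perturbed support $\{a^{-1}\bx+b\bone:\bx\in K\}$ stays inside a fixed compact $K'\subset\E^0$, so we may test $\mu_m$ against the equicontinuous family $f_{b_m,a_m}(\bx):=f(a_m(\bx+b_m\bone))$, all supported in $K'$; then write $|\int f_{b_m,a_m}\,d\mu_m-\int f\,d\mu|\le|\int(f_{b_m,a_m}-f)\,d\mu_m|+|\int f\,d\mu_m-\int f\,d\mu|$, bound the first term by $\|f_{b_m,a_m}-f\|_\infty\,\mu_m(K')$ using uniform continuity of $f$ and the fact that $\sup_m\mu_m(K')<\infty$ (which follows from vague convergence and $K'$ compact), and send the second term to $0$ by hypothesis. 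This gives sequential continuity, hence continuity, of $T$ at the relevant points, and the rest of the argument is the routine continuous-mapping bookkeeping described above.
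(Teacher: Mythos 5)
Your proposal is correct and follows essentially the same route as the paper: the paper likewise derives \eqref{eqn:nu0estimation2} from \eqref{eqn:nu0estimation1} and \eqref{eqn:an0bn0 convergence} and then applies the continuous mapping theorem with the map $(\nu,b,a)\mapsto \nu(a[(\cdot)+b\bone])$. The only difference is that you explicitly verify the continuity of this map at points $(\mu,0,1)$ in the vague topology, a step the paper simply asserts by calling the map ``almost surely continuous,'' so your write-up is, if anything, more complete.
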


{Estimation of $\nu^0 (\cdot)$  in Proposition \ref{prop:nonparametric
  estimation} 
 does not exploit the semi-parametric structure
discussed in Section \ref{sec:semiparaoflimit} and has the disadvantages
 that (a) there
is no guarantee the estimator $\widehat{ \nu_n^0 }(\cdot)$ is 
 even a member of the class of possible limit measures; and (b) we are
 required to estimate $c(\cdot)$ and $d(\cdot)$.
These disadvantages are overcome using the
semi-parametric structure as was done for
 HRV in
\citep{mitra:resnick:2010a}.
We  need to estimate the the extreme
value index $\gamma^0$  of the distribution of $X^{(2)}$ as well as  the standardized
hidden spectral measure $S^0$.  Since $\{ X^{(2)}_i: i = 1, 2, \cdots,
n \}$ is iid data, estimating $\gamma^0$ of $X^{(2)}$ is a standard procedure \citep[page
65]{dehaan:ferreira:2006} so we concentrate on estimating
$S^0(\cdot)$. A modification of a
ranks method \citep{huang:1992,dehaan:ferreira:2006,
  heffernan:resnick:2005,
resnickbook:2007}
to obtain an estimator of  
$\tilde \nu^0(\cdot)$ avoids the need to estimate $c(\cdot)$ and $d(\cdot)$.
For $i =1, 2, \cdots,n$, define
\begin{align}\label{defn:new_antiranks}
R^{1, (2)}_i = \big{|} \big\{ j: X^{(2)}_j \ge X^1_i \bigr\} \big{|}
\hskip 0.5 cm \hbox{and} \hskip 0.5 cm  R^{2, (2)}_i = \big{|} \bigl\{
  j: X^{(2)}_j \ge X^2_i \bigr\} \big{|}, 
\end{align}
where $|\cdot |$ denotes size of  a set. Note that $0 \le R^{j, (2)}_i
\le n$ for $i = 1, 2, \cdots, n, j = 1, 2$. Also notice that since
$R^{1, (2)}_i \vee R^{2,(2)}_i = {|} \{ j: X^{(2)}_j \ge
  X^{(2)}_i \} {|}$, $1 \le R^{1, (2)}_i \vee R^{2,(2)}_i
\le n$ for $i = 1, 2, \cdots, n$. Proposition \ref{prop:tildenu0}
gives an
estimator of $\tilde \nu^0$ which we can modify to get an estimator of $S^0(\cdot)$.

\begin{prop}\label{prop:tildenu0} 
{We have in $M_+((0, \infty]^2)$,}
\begin{align}\label{eqn:tildenu0estimate}
\widehat{\tilde \nu^0_n} (\cdot) :=\frac{1}{k} \sum_{i=1}^n
\epsilon_{\left(k/R^{1, (2)}_i, \hskip 0.1 cm 
    k/R^{2, (2)}_i \right)}(\cdot) \Rightarrow \tilde \nu^0(\cdot)
\qquad (k \to \infty, n/k \to \infty),
\end{align}
 where \eqref{equihrv} defines 
$\tilde \nu^0(\cdot)$   and \eqref{defn:new_antiranks} defines $R^{1, (2)}_i$ and $R^{2, (2)}_i$.
\end{prop}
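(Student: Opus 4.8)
The plan is to reduce the statement to the already-established regular variation in \eqref{equihrv} together with a standard consistency result for the ranks (or anti-ranks) transformation, exactly as in the HRV setting of \citep{mitra:resnick:2010a}. First I would observe that \eqref{equihrv} says precisely that $\bigl(U^{(2)}(X^1), U^{(2)}(X^2)\bigr)$ is (standard, i.e. tail-index $1$) regularly varying on $(\bzero,\binfty]$ with limit measure $\tilde\nu^0$. The empirical-process version of this, obtained from \citep[page 139]{resnickbook:2007} as in \eqref{eqn:nu0estimation1}, is that
\begin{align*}
\frac1k \sum_{i=1}^n \epsilon_{\left( \frac{k}{n} U^{(2)}(X^1_i),\; \frac{k}{n} U^{(2)}(X^2_i)\right)}(\cdot) \Rightarrow \tilde\nu^0(\cdot)
\end{align*}
in $M_+((\bzero,\binfty])$ as $k\to\infty$, $n/k\to\infty$. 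So the only gap between this and \eqref{eqn:tildenu0estimate} is that the unknown functions $U^{(2)}$ are replaced by their empirical analogues built from the ranks $R^{j,(2)}_i$.

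Next I would make the connection between $R^{j,(2)}_i$ and $U^{(2)}$ explicit. Let $H^{(2)}$ be the distribution of $X^{(2)}$ and let $\widehat{F}^{(2)}_n$ denote (a version of) the empirical distribution function of the sample $\{X^{(2)}_1,\dots,X^{(2)}_n\}$. By the definition \eqref{defn:new_antiranks}, $R^{1,(2)}_i = n\bigl(1-\widehat{F}^{(2)}_n(X^1_i-)\bigr)$ and similarly for $R^{2,(2)}_i$, so that $n/R^{1,(2)}_i$ is the empirical counterpart of $U^{(2)}(X^1_i) = 1/\overline{H^{(2)}}(X^1_i)$, and $k/R^{1,(2)}_i = (k/n)\cdot(n/R^{1,(2)}_i)$ is the empirical counterpart of $(k/n)U^{(2)}(X^1_i)$. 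The Glivenko--Cantelli theorem gives $\sup_x |\widehat{F}^{(2)}_n(x) - H^{(2)}(x)| \to 0$ a.s., and this uniform control can be upgraded, on the relevant tail region where we only look at the roughly $k$ largest order statistics, to the statement that the random map sending the points $\bigl((k/n)U^{(2)}(X^1_i),(k/n)U^{(2)}(X^2_i)\bigr)$ to $\bigl(k/R^{1,(2)}_i,k/R^{2,(2)}_i\bigr)$ converges (in probability, uniformly on sets bounded away from $\bzero$) to the identity. This is the standard ``replace the marginal by its empirical version'' argument underlying the ranks method; it is carried out in detail in \citep{huang:1992}, \citep[Chapter 9]{resnickbook:2007} and in \citep{mitra:resnick:2010a} for the HRV case, and the reasoning transfers verbatim here because \eqref{equihrv} puts us back in the standard regularly varying setting on $(\bzero,\binfty]$.

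Then I would assemble the pieces: combine the empirical-measure convergence displayed above with the in-probability convergence of the rank transformation to the identity, and apply a converging-together (Slutsky-type) argument for random measures on $M_+((\bzero,\binfty])$ — concretely, one shows that for each relatively compact rectangle $(\bx,\binfty]$ bounded away from $\bzero$ and each $\epsilon>0$, the measures $\widehat{\tilde\nu^0_n}$ and the $U^{(2)}$-based empirical measure differ by less than $\epsilon$ with probability tending to $1$, using the monotonicity of the rank transformation to sandwich $\widehat{\tilde\nu^0_n}\bigl((\bx,\binfty]\bigr)$ between two slightly perturbed versions of the $U^{(2)}$-based count. Since $\tilde\nu^0$ is a fixed (non-random) Radon measure and its boundaries $\partial(\bx,\binfty]$ are $\tilde\nu^0$-null by the scaling property \eqref{scaling_nu0tilde}, these perturbations wash out in the limit, yielding \eqref{eqn:tildenu0estimate}.

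The main obstacle is the second step: making rigorous the claim that replacing $U^{(2)}$ by its empirical version is asymptotically negligible, uniformly on the tail region seen by the top $k$ order statistics, and in particular handling the subtlety that $U^{(2)}(X^i)$ may be infinite with positive probability when the support of $X^{(2)}$ is strictly smaller than that of $X^i$ (the extended-random-variable issue flagged in the remark after Proposition \ref{prop:equihrv}). I expect to deal with this by noting that such points contribute atoms of $\tilde\nu^0$ on the ``boundary at infinity'' corresponding to the faces where one coordinate is $\infty$, and that the ranks construction automatically places $R^{j,(2)}_i$ at its minimal value for those observations, so the empirical measure tracks these atoms correctly; the bookkeeping is routine but is where care is needed. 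The remaining steps are standard applications of results already quoted in the excerpt.
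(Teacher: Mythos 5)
Your proposal is correct in outline but follows a genuinely different route from the paper's proof. You start from the transformed statement \eqref{equihrv}, write down the empirical measure of the points $\bigl((k/n)U^{(2)}(X^1_i),(k/n)U^{(2)}(X^2_i)\bigr)$, and then argue that swapping $U^{(2)}$ for its empirical (rank-based) version is asymptotically negligible via a sandwich argument. The paper instead works directly with the untransformed convergence \eqref{hiddendoasmallcone}: it shows the tail empirical measure of the $X^{(2)}_i$ converges to $[\psi^0(\cdot)]^{-1}$ in $D((0,\infty])$, inverts this (Vervaat-type argument) to get that the intermediate order statistics $\bigl(X^{(2)}_{(\lceil k/s\rceil)}-d(n/k)\bigr)/c(n/k)$ converge to $(\psi^0)^{\leftarrow}(s)$ in $D_{left}((0,\infty])$, and then exploits the \emph{exact} combinatorial identity $\{R^{1,(2)}_i < k/s\}=\{X^1_i > X^{(2)}_{(\lceil k/s\rceil)}\}$ together with joint convergence and continuous mapping. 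The payoff of the paper's route is that no approximation of $U^{(2)}$ by an empirical version is ever made --- the anti-rank events coincide identically with order-statistic threshold events, so the ``replacement error'' you identify as the main obstacle simply never arises, and the extended-random-variable issue ($U^{(2)}(X^i)=\infty$ with positive probability) is absorbed automatically because $R^{j,(2)}_i$ can be $0$ and the point $k/R^{j,(2)}_i=\infty$ lands correctly on the face at infinity.

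One concrete caveat on your version: plain Glivenko--Cantelli is not strong enough to do the work you assign it. It controls $\sup_x|\widehat{F}^{(2)}_n(x)-H^{(2)}(x)|$ at scale $o(1)$, whereas on the tail region seen by the top $k\ll n$ order statistics you need \emph{relative} control of $n\bigl(1-\widehat{F}^{(2)}_n(x)\bigr)$ against $n\overline{H^{(2)}}(x)\asymp k$, i.e.\ convergence of the (normalized) tail empirical process. That stronger statement is true, but its standard proof is precisely the tail-empirical-measure-plus-inversion argument the paper runs, so if you fill that gap rigorously you will essentially have reproduced the paper's proof as a lemma inside your own.
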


\begin{proof}
From \eqref{hiddendoasmallcone} and definition of $\psi^0(\cdot)$
given in \eqref{defn:psi0}, {we have} {i}n $D({(}0, \infty
{]})$,
\begin{align*}
\frac{1}{k} \sum_{i=1}^n \epsilon_{\left((X^{(2)}_i - d(n/k))/c(n/k)
  \right)}((x, \infty]) \Rightarrow {[\psi^0(x)]}^{-1} .
\end{align*}
 Hence \citep[page 58]{resnickbook:2007},  inverse functions also
 converge in distribution {i}n $D_{left}((0, \infty])$, the space
 of left continuous functions with finite right limits,
\begin{align}\label{invconvpsi}
\inf \{ x: \frac{1}{k} \sum_{i=1}^n \epsilon_{\left((X^{(2)}_i -
    d(n/k))/c(n/k) \right)}((x, \infty])  \le 1/s \}\Rightarrow \inf
\{ x: {[\psi^0(x)]}^{-1} \le 1/s \} = {\left(\psi^0
  \right)}^{\leftarrow}(s). 
\end{align}
{Write the order statistics of $\{X_1^{(2)},\dots,X_n^{(2)}\}$ as
$X_{(1)}^{(2)} \geq \dots \geq X_{(n)}^{(2)}$ and}
observe {the left side of \eqref{invconvpsi} is}
\begin{align}\label{eqn:invpsiestcomputation}
\inf \{ x:
\sum_{i=1}^n \epsilon_{\left((X^{(2)}_i - d(n/k))/c(n/k) \right)}((x,
\infty])  \le k/s \} 
= \left(\frac{X^{(2)}_{(\lceil k/s \rceil)} - d(n/k)}{c(n/k)} \right).
\end{align}
From \eqref{eqn:nu0estimation1}, \eqref{invconvpsi} and
\eqref{eqn:invpsiestcomputation} we get that as $k \to \infty$ and
$n/k \to \infty,$ 
\begin{align}\label{eqn:jointnu0conv}
\Bigl( \frac{1}{k} \sum_{i=1}^n \epsilon_{\left( \frac{\bX_i -
        d(n/k)\bone}{c(n/k)}\right)}(\cdot),  &
\frac{ X^{(2)}_{(\lceil k/s \rceil)} ) - d(n/k)}{c(n/k)}, 
\frac{ X^{(2)}_{(\lceil k/t \rceil)} ) - d(n/k)}{c(n/k)} \Bigr)
\nonumber \\
&\Rightarrow 
\left( \nu^0(\cdot), \left( {\left(\psi^0
      \right)}^{\leftarrow}(s), {\left(\psi^0 \right)}^{\leftarrow}(t)
  \right)\right) 
\end{align}
{i}n $M_+(\E^0) \times D_{left}{(}(0,\infty] {)}\times
  D_{left}{(}(0,\infty]{)}$.
{Using the scaling technique as in \citet[page 311]{resnickbook:2007}}
we get from \eqref{eqn:jointnu0conv}  that 
as $k \to \infty$ and $n/k \to \infty,$
\begin{align}\label{eqn:jointnu0conv2}
 \frac{1}{k} \sum_{i=1}^n 1_{\left \{ X^1_i > X^{(2)}_{(\lceil k/{s} \rceil)}, X^2_i > X^{(2)}_{(\lceil k/t \rceil)} \right \} } \Rightarrow \nu^0 \left( \left({\left(\psi^0 \right)}^{\leftarrow}(s), \infty \right] \times \left({\left(\psi^0 \right)}^{\leftarrow}(t), \infty \right] \right) = \tilde \nu^0 \left( (s, \infty] \times (t, \infty] \right),
\end{align}
{i}n $D_{left}{(}0,\infty]{)}\times
  D_{left}{(}(0,\infty] {)}$. Since the left side of
  \eqref{eqn:jointnu0conv2} is
\begin{align*}
\frac{1}{k} \sum_{i=1}^n 1_{\left \{ R^{1, (2)}_i <
     k/s, \hskip 0.1 cm R^{2, (2)}_i < k/t \right \} } 
= \frac{1}{k} \sum_{i=1}^n 1_{\left \{ s <  k/R^{1, (2)}_i, \hskip 0.1 cm t < k/R^{2, (2)}_i \right \} },
 \end{align*} we have proven \eqref{eqn:tildenu0estimate}.
\end{proof}

Proposition  \ref{prop:tildenu0}  yields an estimator of the limit measure $\nu_1 \times
S^0(\cdot)$ and then an estimator of~$S^0(\cdot)$.

\begin{prop}\label{prop:spectralestimation_smallcone}
{{ The convergence in \eqref{eqn:tildenu0estimate} is equivalent to 
\begin{align}\label{eqn:tildenu0decompestimate}
\widehat{\nu_1 \times  S^0}_n(\cdot) :=
\frac{1}{k} \sum_{i=1}^n \epsilon_{\left(\frac{k}{R^{1, (2)}_i \vee
      R^{2, (2)}_i} , \left(\frac{R^{1, (2)}_i \vee R^{2,
          (2)}_i}{R^{1, (2)}_i}, \frac{R^{1, (2)}_i \vee R^{2,
          (2)}_i}{R^{2, (2)}_i} \right) \right)}(\cdot) \Rightarrow
\nu_1 \times  S^0(\cdot) 
\end{align}
in $M_+((0, \infty] \times \delta \aleph^{(2)})$, where $\nu_1 \times  S^0(\cdot)$ is given in Proposition \ref{prop:decomp}.
}}
\end{prop}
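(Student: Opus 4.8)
The plan is to derive this exactly as Proposition~\ref{prop:decomp} is derived from \eqref{equihrv}: the two empirical objects differ only by the deterministic polar-type change of coordinates that turns $\tilde\nu^0$ into $\nu_1\times S^0$. Concretely, introduce
\[
T(\bx)=\Bigl(x^{(2)},\ \bx/x^{(2)}\Bigr),\qquad \bx\in(0,\infty]^2,\ \bx\neq\binfty,
\]
which maps into $(0,\infty]\times\delta\aleph^{(2)}$ and has continuous inverse $T^{-1}(r,\bs)=r\bs$. Since $k/x$ is decreasing, $\bigl(k/R^{1,(2)}_i\bigr)\wedge\bigl(k/R^{2,(2)}_i\bigr)=k/(R^{1,(2)}_i\vee R^{2,(2)}_i)$, and dividing the pair $\bigl(k/R^{1,(2)}_i,\ k/R^{2,(2)}_i\bigr)$ by this minimum produces exactly the pair $\bigl((R^{1,(2)}_i\vee R^{2,(2)}_i)/R^{1,(2)}_i,\ (R^{1,(2)}_i\vee R^{2,(2)}_i)/R^{2,(2)}_i\bigr)$, which lies in $\delta\aleph^{(2)}$. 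Hence $T$ carries each atom of $\widehat{\tilde\nu^0_n}$ in \eqref{eqn:tildenu0estimate} to the corresponding atom of $\widehat{\nu_1\times S^0}_n$ in \eqref{eqn:tildenu0decompestimate}; equivalently $\widehat{\nu_1\times S^0}_n=\widehat{\tilde\nu^0_n}\circ T^{-1}$ as random elements of $M_+\bigl((0,\infty]\times\delta\aleph^{(2)}\bigr)$. The transformation is well defined on all the atoms because $R^{1,(2)}_i\vee R^{2,(2)}_i\ge 1$, so no atom sits at $\binfty$.

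Next I would push \eqref{eqn:tildenu0estimate} through $T$. The induced map $\mu\mapsto\mu\circ T^{-1}$ is continuous from $M_+\bigl((0,\infty]^2\bigr)$ to $M_+\bigl((0,\infty]\times\delta\aleph^{(2)}\bigr)$, because $T$ is continuous and \emph{proper} (preimages of relatively compact sets are relatively compact); this is the standard polar-coordinate statement underlying the decomposition of regularly varying measures \citep{resnickbook:2007} and behind Proposition~3.1 of \citep{mitra:resnick:2010a}, already invoked to get \eqref{eqn:decomp}. Applying the continuous mapping theorem for weak convergence of random measures to \eqref{eqn:tildenu0estimate} yields $\widehat{\nu_1\times S^0}_n=\widehat{\tilde\nu^0_n}\circ T^{-1}\Rightarrow\tilde\nu^0\circ T^{-1}$, and by \eqref{tildenu0ands0} — that is, by Proposition~\ref{prop:decomp} — the limit $\tilde\nu^0\circ T^{-1}$ equals $\nu_1\times S^0$. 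Since $T^{-1}$ is also continuous and proper, the reverse implication goes the same way, giving the asserted equivalence.

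The step requiring the most care is the behaviour of $T$ near $\binfty$, equivalently near the slice $\{\infty\}\times\delta\aleph^{(2)}$: $T$ is not a homeomorphism of the two fully compactified cones, so one must check that none of the measures in play charges these boundary pieces. By the scaling property \eqref{scaling_nu0tilde}, $\tilde\nu^0(\{\binfty\})=0$; and $\nu_1(\{\infty\})=\lim_{y\to\infty}y^{-1}=0$ forces $(\nu_1\times S^0)\bigl(\{\infty\}\times\delta\aleph^{(2)}\bigr)=0$; while the empirical atoms never land on those sets because $R^{1,(2)}_i\vee R^{2,(2)}_i\ge 1$ keeps every first coordinate at most $k$. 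Restricting to $(0,\infty]^2\setminus\{\binfty\}$ and $(0,\infty)\times\delta\aleph^{(2)}$, where $T$ genuinely is a proper homeomorphism, the continuous-mapping argument runs unchanged, and since none of the limit or empirical measures charges the omitted boundary, convergence on the restricted cones is equivalent to convergence on the full cones. The remaining verifications are the routine bookkeeping already carried out for Proposition~\ref{prop:decomp}.
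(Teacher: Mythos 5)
Your proposal is correct and follows essentially the same route as the paper, which proves this result by invoking Proposition \ref{prop:decomp} and the polar-type map $\bx \mapsto (x^{(2)}, \bx/x^{(2)})$ exactly as in Proposition 3.7 of \cite{mitra:resnick:2010a}. You have simply written out in full the bookkeeping the paper leaves to that reference: the identification of the transformed empirical atoms, the continuity and properness of the map away from $\binfty$, and the check that neither the limit measures nor the atoms charge the excluded boundary.
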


\begin{proof}
The proof uses Proposition \ref{prop:decomp} and follows exactly
similar steps as that of Proposition 3.7 of
\cite{mitra:resnick:2010a}. {It is based on the map $\bx \mapsto
(x^{(2)}, \bx/x^{(2)})$.}
\end{proof}

From the convergence in \eqref{eqn:tildenu0decompestimate}, we
construct a consistent estimator of $S^0(\cdot)$:  
\begin{align}\label{eqn:s0pestimate}
\hat{S}_n^0(\cdot) :=
\frac{ \sum_{i=1}^n \epsilon_{\left(\frac{k}{R^{1, (2)}_i \vee R^{2,
          (2)}_i} , \left(\frac{R^{1, (2)}_i \vee R^{2, (2)}_i}{R^{1,
            (2)}_i}, \frac{R^{1, (2)}_i \vee R^{2, (2)}_i}{R^{2,
            (2)}_i} \right) \right)} \left([1, \infty] \times \cdot
  \right)}{\sum_{i=1}^n \epsilon_{\frac{k}{R^{1, (2)}_i \vee R^{2,
        (2)}_i} }([1, \infty])} \Rightarrow S^0(\cdot) 
\end{align}
in $M_+(\delta \aleph^{(2)})$. Hence, we have obtained a consistent estimator for both the extreme value index $\gamma^0$ and the standardized hidden spectral measure $S^0$.

{It is possible that} $R^{j, (2)}_i {=0}$ for some $j = 1, 2$ and
some $i = 1, 2, \cdots, n$ and thus 
division by zero may be indicated in 
\eqref{eqn:s0pestimate}.
 Though theoretically justified, this is not  desirable
when writing code for an estimator. The continuous
bijection $T: \delta \aleph^{(2)} \mapsto [0,1]$ given by $T: {\bx }
\mapsto x^2/(x^1 + x^2)$ provides an instant remedy. We use the
convention that $\infty/\infty = 1$ and $1/\infty = 0$. Using this
transformation, \eqref{eqn:s0pestimate} becomes  
\begin{align}\label{eqn:trs0pestimate}
\frac{ \sum_{i=1}^n \epsilon_{\left(\frac{k}{R^{1, (2)}_i \vee R^{2, (2)}_i} , \left( \frac{R^{1, (2)}_i }{R^{1, (2)}_i + R^{2, (2)}_i} \right) \right)} \left([1, \infty] \times \cdot \right)}{\sum_{i=1}^n \epsilon_{\frac{k}{R^{1, (2)}_i \vee R^{2, (2)}_i} }([1, \infty])} \Rightarrow S^0 \circ T^{-1}(\cdot)
\end{align}
in $M_+([0, 1])$. Since $T$ is a continuous bijection, we  retrieve $S^0$ from 
$S^0 \circ T^{-1}$.

\section{Non-standard hidden domain of attraction}\label{sec:nonstandard}
{To provide more scope for applications, 
the non-standard case no longer assumes that $X^1
\stackrel{d}{=} X^2$. However, we have found that to construct a
coherent estimation theory requires careful consideration of the
definitions.
As in the standard case, the goal is to approximate
 marginal and
joint tail probabilities.}

\subsection{How to proceed?}\label{subsec:how?}
In order for \eqref{eqn:mevt} to hold when {$\bX$ has different
  marginal distributions, one typically needs
different centering and scaling
constants for the two components of $\bX$.  Traditional theory  \cite[page
277, Proposition 5.15]{resnickbook:2008} proceeds by standardizing each
component.
However,  a theory of hidden domain of attraction
that follows this approach encounters  problems in the estimation
procedure that we could not  resolve without strong second order
conditions. 

We deviate from the
traditional MEVT treatment by requiring that both
components in \eqref{doabigcone} have
 the same centering and scaling   but permitting the limit
measure to have one  zero marginal. By a zero marginal, we mean
 that either the limit measure 
$\nu(\cdot)$ in
\eqref{doabigcone} has the property
$$\nu\left( \left \{
    \bz \in \E : z^2 > y \right \} \right) = 0 \qquad ((y,y)
\in \E)$$ 
or the same holds with $z^1$ in place of $z^2$.
This could happen, for instance, if the tail of $X^2$ is lighter than
that of $X^1$ or vice versa.}

 In the non-standard case, if we assume
\eqref{doabigcone}, the limit measure $\nu$ may satisfy:
\begin{enumerate}[(i)]
\item  $\nu$ has a zero second marginal: for $(y,y) \in
  \E$, $\nu\left( \left \{ \bz \in \E : z^2 > y \right \}
  \right) = 0$;
\item  $\nu$ has a zero first marginal: for $(y,y) \in \E$, $\nu\left( \left \{ \bz \in \E : z^1 > y \right \} \right) = 0$,
\item the cases (i) and (ii) do not hold, but $\nu\left( \left \{ \bz \in \E : z^1 > x, z^2 > y \right \} \right) = 0$ for $(x, y) \in \E$,
\item for $(x, y) \in \E$, $\nu\left( \left \{ \bz \in \E : z^1 > x, z^2 > y \right \} \right) > 0$.
\end{enumerate} 

Case (iv) means \eqref{doabigcone} yields
non-zero estimates of the marginal and joint tail probabilities, so
in this case we have no need to define HDA. The definition and analysis of
HDA in case (iii) is the same as the standard case discussed in
Section  
\ref{sec:standard}. {The definition and analysis of HDA are very
  similar for cases (i) and (ii) so focus only on case (i).}

For our definition of HDA,  a relevant state space  is
  $\E^{\sqcap}$ 
where either $\E^{\sqcap} = [-\infty, \infty] \times (-\infty, \infty]$ or
$\E^{\sqcap}= [0, \infty] \times (0, \infty]$. We will see
later that for  case (i) it is possible to find HDA on both the cones $\E^{\sqcap}$
and $\E^0$ in sequence. Thus, compared to the standard case,
our estimation procedure here might involve analyzing HDA on the
additional cone $\E^{\sqcap}$.

\begin{defn}\label{defn:hiddendoamediumcone}
{\rm{The distribution of ${\bX } = (X^1, X^2)$ has hidden domain of
    attraction on the cone $\E^{\sqcap}$ if \eqref{doabigcone} holds
    with the limit measure $\nu$, the second marginal of $\nu$ is a
    zero measure and in addition, there exist 
    constants $ e_n>0$ and $ f_n \in \mathbb{R}$ and a non-zero measure
    $\nu^{\sqcap} \in M_+(\E^{\sqcap})$ such that  
 as $n \to \infty$,
\begin{align}\label{eqn:hiddendoamediumcone}
nP\left[ \left( \frac{{{\bf{X}} -f_n{\bf{1}}}}{e_n} \right) \in
  \cdot \right] \stackrel{v}{\rightarrow} \nu^{\sqcap}(\cdot)\qquad 
\text{in }M_+(\E^{\sqcap}).
\end{align}

}}
\end{defn}
From \eqref{eqn:hiddendoamediumcone} it follows that  for $(y,y) \in \E^{\sqcap}$, as $n \to \infty$,
\begin{equation}\label{eqn:scompdoa}
nP[ X^2 > e_ny + f_n] \to \nu^{\sqcap}\left( \{(u,v)\in E^\sqcap: v>y\} \right).
\end{equation}
Therefore, the distribution of $X^2$, {the second component of
  ${\bX }$,} belongs to the maximal domain of attraction of an extreme
value distribution \cite[page 4]{dehaan:ferreira:2006}. Using
one-dimensional extreme value theory, $\psi^{\sqcap}(y) :=
{\left[\nu^{\sqcap}\left([0, \infty] \times (y, \infty] \right)
  \right]}^{-1}$ must take one of the following forms \cite[page
9]{resnickbook:2008}: 

\begin{align}\label{eqn:formsofpsisqcap}
\begin{array}{lc}
 \psi^{\sqcap}(y) = \left \{ \begin{array}{cc}
y^{1/\gamma^{\sqcap}}, & \hbox{if $y >0$,}\\
0, & \hbox{otherwise,}
\end{array} \right. & \hbox{if $ \gamma^{\sqcap} > 0,$}\\
 \psi^{\sqcap}(y) = e^{y}, \, y \in \mathbb{R},&  \hbox{if $ \gamma^{\sqcap} = 0,$}\\
 \psi^{\sqcap}(y) = \left \{ \begin{array}{cc}
\infty, & \hbox{if $y > 0$,}\\
(-y)^{-1/\gamma^{\sqcap}}, & \hbox{otherwise,}
\end{array} \right. & \hbox{if $ \gamma^{\sqcap} < 0.$}
\end{array}
\end{align}
The parameter $\gamma^{\sqcap}$ in \eqref{eqn:formsofpsisqcap} is the
extreme value index of $X^2$. We {can and always do}  choose $\{ e_n \}$ and $\{ f_n
\}$ in such a way that $\psi^{\sqcap}$ takes one of the above
forms.

\begin{rem}\label{rem:hiddendoamediumcone}
{\rm{ We make a few remarks about Definition \ref{defn:hiddendoamediumcone}.

\begin{enumerate}
\item  {Since \eqref{doabigcone} holds, so does}
  \eqref{eqn:maxdoa} {and therefore} the maximum component
  $X^{(1)}$ belongs to the maximal domain of attraction of some
  extreme value distribution. Since $X^{(1)} \ge X^2$, the convergence
  relation \eqref{doabigcone} {constrains the}
  possible convergences {in}
  \eqref{eqn:hiddendoamediumcone}. For example, if
  {\eqref{eqn:maxdoa} has} 
  $X^{(1)}$ in the Gumbel domain of
  attraction, then the distribution of $X^2$ cannot have a
  regularly varying tail.

\item \label{rem2:hiddendoamediumcone} {T}he distribution of $X^2$ {determines} the cone
  $\E^{\sqcap}$ and \eqref{eqn:scompdoa} yields the scaling and centering constants $\{e_n\}$ and
  $\{f_n \}$. If the distribution of $X^2$ is in the Fr\'{e}chet
  domain of attraction,
  $\E^{\sqcap} = [0, \infty] \times (0, \infty]$ and otherwise, 
  $\E^{\sqcap} = [-\infty, \infty] \times (-\infty, \infty]$. 
\end{enumerate}
}}
\end{rem}

{There are two possibilities for the limit measure}
$\nu^{\sqcap}$ in \eqref{eqn:hiddendoamediumcone}:

\begin{enumerate}[(i)]
\item the limit measure $\nu^{\sqcap}$ puts zero mass on all sets $(x,
  \infty] \times (y, \infty]$ for $(x, y) \in \E^{\sqcap}$; or 
\item the limit measure $\nu^{\sqcap}$ puts non-zero mass on one of the sets  $(x, \infty] \times (y, \infty]$ for $(x, y) \in \E^{\sqcap}$. 
\end{enumerate}

{T}he semi-parametric structure of $\nu^{\sqcap}$ {discussed}
in the next section {implies}
 that for  case (ii), $\nu^{\sqcap}((x, \infty] \times (y,
\infty]) > 0$ for all $(x, y) \in \E^{\sqcap}$. So, in case (ii), we  get non-zero estimates of
joint tail probabilities and since we accomplished our goal there is
no reason to seek further instances of HDA.
However, in case (i), the measure $\nu^\sqcap (\cdot)$ will not
provide non-zero estimates of joint tail probabilities. A potential
solution is that HDA could still exist a smaller cone such as $\E^0$. The
following definition formalizes the concept. {For this definition, the
state space is $\E^0$,
 where $\E^0 = {(-\infty, \infty]}^2$ or $\E^0 = (0, \infty]^2$.}

\begin{defn}\label{defn:nonstansecondstage}
{\rm{ The distribution of ${\bX } = (X^1, X^2)$ has hidden domain of attraction on the cones $\E^{\sqcap}$ and $\E^0$ if Definition \ref{defn:hiddendoamediumcone} holds, the 
the limit measure $\nu^{\sqcap}$ in \eqref{eqn:hiddendoamediumcone} puts zero mass to all sets of the form $(x, \infty] \times (y, \infty]$ for $(x, y) \in \E^{\sqcap}$, and in addition, 
there exist centering and scaling constants $\{ c_n \}$ and $\{d_n \}$ and a non-zero measure $\nu^0 \in M_+(\E^0)$ such that as $n \to \infty$,
\begin{align}\label{eqn:nonstan_hiddendoasmallcone}
nP\left[ \left( \frac{     \bX -d_n \bone       }{  c_n  } \right) \in
  \cdot \, \right] \stackrel{v}{\rightarrow} \nu^0(\cdot) \qquad \text{in }M_+(\E^0).
\end{align}
}}
\end{defn}
As noted before in \eqref{eqn:mindoa}, the scaling and centering
constants $\{ c_n \}$ and $\{ d_n \}$ in
\eqref{eqn:nonstan_hiddendoasmallcone} are characterized by the
distribution of $X^{(2)}$, 
the minimum component of ${\bX }$. Recall the definition of $\psi^0$
given in \eqref{defn:psi0}. As was done in the standard case
discussion, we choose the scaling and centering constants $\{ c_n \}$
and $\{ d_n \}$ in \eqref{eqn:nonstan_hiddendoasmallcone} so that
$\psi^0$ takes one of the forms given in \eqref{eqn:formsofpsi}. Also, 
{whether $\E^0$ in
\eqref{eqn:nonstan_hiddendoasmallcone} 
 is $(-\infty, \infty]^2$ or $(0,
\infty]^2$
 is determined by the
distribution of $X^{(2)}$.}

\subsection{Semi-parametric structure of
  $\nu^{\sqcap}$}\label{sec:nonstansemipara} 
Both  limit measures $\nu^{\sqcap}$ of
\eqref{eqn:hiddendoamediumcone} and $\nu^0$ of
\eqref{eqn:nonstan_hiddendoasmallcone} have semi-parametric
structures. {Since} the semi-parametric structure of $\nu^0$ was
discussed in Section \ref{sec:semiparaoflimit}, {we}
concentrate only on the semi-parametric structure of $\nu^{\sqcap}$ {and
proceed as follows.}

Recall that the distribution{s} of ${\bX }$ and $X^2$ are  $H$ and $H^2$.  Define 
\begin{equation}\label{defn:U2}
U^2(x) = 1/(1 - H^2(x)).
\end{equation}
The following proposition relates \eqref{eqn:hiddendoamediumcone} to a
regular variation condition on $[0, \infty] \times (0, \infty]$.
Its proof is  similar to {that} of Proposition \ref{prop:equihrv} and is omitted.

\begin{prop}\label{prop:nonstanequihrv}
{{Convergence in \eqref{eqn:hiddendoamediumcone} is equivalent to
    {regular variation on the cone $[0, \infty] \times (0, \infty]$},
\begin{equation}\label{nonstanequihrv}
nP\left[ \left( \frac{U^2(X^1)}{n}, \frac{U^2(X^2)}{n} \right) \in
  \cdot \right] \stackrel{v}{\rightarrow} \tilde \nu^{\sqcap}(\cdot)
\qquad (\text{in }M_+([0, \infty] \times (0, \infty]),
\end{equation}
 where \eqref{defn:U2} defines $U^2(\cdot)$  and $\tilde
 \nu^{\sqcap}(\cdot)$ is a Radon measure on $[0, \infty] \times (0,
 \infty]$. The limit measure $\tilde \nu^{\sqcap}(\cdot)$ is related
 to the limit measure in $\nu^{\sqcap}(\cdot)$ in
 \eqref{eqn:hiddendoamediumcone} by the following relation: for $(x^1,
 x^2) \in [0, \infty] \times (0, \infty]$,  
\begin{align}\label{nusqcapandtildenusqcap}
\tilde \nu^{\sqcap}\bigl((x^1, \infty] \times (x^2, \infty]\bigr) &=
\nu^{\sqcap} \Bigl( \bigl({(\psi^{\sqcap} )}^{\leftarrow}(x^1), \infty
\bigr] \times \bigl({(\psi^{\sqcap})}^{\leftarrow}(x^2), \infty \bigr]
\Bigr),\nonumber\\ 
\tilde \nu^{\sqcap} \bigl([0, x^1] \times (x^2, \infty]\bigr) &=
\nu^{\sqcap} \Bigl( \left\{ \bz \in \E^{\sqcap}: z^1 \le
  {\left(\psi^{\sqcap}\right)}^{\leftarrow}(x^1), z^2 >
  {\left(\psi^{\sqcap}\right)}^{\leftarrow}(x^2) \right\} \Bigr). 
\end{align}
The measure $\tilde \nu^{\sqcap}(\cdot)$ satisfies the scaling property:
\begin{align}\label{scaling_nusqcaptilde}
\tilde \nu^{\sqcap}(c \cdot) = c^{-1} \tilde \nu^{\sqcap}( \cdot) \hskip 1 cm c > 0.
\end{align}
}}
\end{prop}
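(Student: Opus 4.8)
The plan is to follow, almost word for word, the proof of Proposition~\ref{prop:equihrv}, the only changes being that the standardizing transform is now $U^2$ of \eqref{defn:U2} applied to \emph{both} coordinates (not $U^1$ to the first and $U^2$ to the second) and that the target cone is $[0,\infty]\times(0,\infty]$ in place of $(\bzero,\binfty]$. As there, everything reduces to testing the two convergences against a convergence-determining class of relatively compact rectangles: on $\E^{\sqcap}$ these are the rectangles $(x^1,\infty]\times(x^2,\infty]$ together with $[-\infty,x^1]\times(x^2,\infty]$ (replace $[-\infty,x^1]$ by $[0,x^1]$ in the Fr\'echet case of Remark~\ref{rem:hiddendoamediumcone}), and on $[0,\infty]\times(0,\infty]$ they are $(x^1,\infty]\times(x^2,\infty]$ together with $[0,x^1]\times(x^2,\infty]$, in both cases with $x^2$ bounded away from the deleted face \citep{resnickbook:2007}.

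The one computation that does the work is the following. From \eqref{eqn:scompdoa} we have $n\overline{H^2}(e_ny+f_n)\to[\psi^{\sqcap}(y)]^{-1}$, and since $U^2=1/\overline{H^2}$ this is the statement $U^2(e_ny+f_n)/n\to\psi^{\sqcap}(y)$ for every $y$ with $(y,y)\in\E^{\sqcap}$ (if $\psi^{\sqcap}(y)=\infty$ the left side diverges, which is harmless because then $\nu^{\sqcap}$ carries no mass in that direction). Since $U^2$ is nondecreasing, $\{X^i\le e_ny+f_n\}$ and $\{U^2(X^i)\le U^2(e_ny+f_n)\}$ differ by a $P$-null set, so for a test rectangle with corner $(x^1,x^2)$,
\[
nP\Big[\tfrac{X^1-f_n}{e_n}>x^1,\ \tfrac{X^2-f_n}{e_n}>x^2\Big]=nP\Big[\tfrac{U^2(X^1)}{n}>\tfrac{U^2(e_nx^1+f_n)}{n},\ \tfrac{U^2(X^2)}{n}>\tfrac{U^2(e_nx^2+f_n)}{n}\Big],
\]
and similarly with a left-closed first factor. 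Assuming \eqref{nonstanequihrv} and choosing the $x^i$ so the rectangles are continuity sets of the relevant limit measures, the right-hand side converges, by \eqref{nonstanequihrv} together with $U^2(e_nx^i+f_n)/n\to\psi^{\sqcap}(x^i)$ (a converging-together argument), to $\tilde\nu^{\sqcap}\big((\psi^{\sqcap}(x^1),\infty]\times(\psi^{\sqcap}(x^2),\infty]\big)$, which by the first line of \eqref{nusqcapandtildenusqcap} and the identity $\psi^{\sqcap}\circ(\psi^{\sqcap})^{\leftarrow}=\mathrm{id}$ on the relevant range equals $\nu^{\sqcap}\big((x^1,\infty]\times(x^2,\infty]\big)$; the left-closed rectangles reproduce the second line of \eqref{nusqcapandtildenusqcap}. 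Hence \eqref{eqn:hiddendoamediumcone} holds. For the converse one runs the same chain of equalities backwards, now reading \eqref{nusqcapandtildenusqcap} as the definition of $\tilde\nu^{\sqcap}$ in terms of $\nu^{\sqcap}$ and using $(\psi^{\sqcap})^{\leftarrow}\circ\psi^{\sqcap}=\mathrm{id}$; as in Proposition~\ref{prop:equihrv} I would state this and omit the details.

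The scaling property \eqref{scaling_nusqcaptilde} does not involve the first coordinate at all: \eqref{nonstanequihrv} exhibits $(U^2(X^1),U^2(X^2))$, rescaled by $n$, as standard multivariate regular variation on $[0,\infty]\times(0,\infty]$, and the limit measure of standard multivariate regular variation is always $(-1)$-homogeneous. If one wants it directly, replace $n$ by $\lfloor nt\rfloor$ in \eqref{nonstanequihrv} to get the same limit $\tilde\nu^{\sqcap}$, and then compare, for a $\tilde\nu^{\sqcap}$-continuity set $A$, the quantities $nP[n^{-1}(U^2(X^1),U^2(X^2))\in tA]=nP[(nt)^{-1}(U^2(X^1),U^2(X^2))\in A]$ and $\lfloor nt\rfloor P[\lfloor nt\rfloor^{-1}(U^2(X^1),U^2(X^2))\in A]$, which differ only by the factor $n/\lfloor nt\rfloor\to t^{-1}$ up to an error that vanishes by a standard sandwich argument; letting $n\to\infty$ gives $\tilde\nu^{\sqcap}(tA)=t^{-1}\tilde\nu^{\sqcap}(A)$. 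This is exactly the reasoning behind \eqref{scaling_nu0tilde}.

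\textbf{Main obstacle.} The only real (and still mild) point of care, compared with Proposition~\ref{prop:equihrv}, is the asymmetry of the target cone. Because $U^2$ is the tail transform of $X^2$ only, $U^2(X^1)$ may have an arbitrarily heavy ``marginal'' and $\tilde\nu^{\sqcap}$ need not be finite on sets approaching $\{z^1=0\}$; the relatively compact subsets of $[0,\infty]\times(0,\infty]$ are precisely those bounded away from $\{z^2=0\}$ (and from that face only). This is exactly why both families of rectangles above, and hence both lines of \eqref{nusqcapandtildenusqcap}, are genuinely needed, and why one must choose the convergence-determining class with this asymmetry in mind. Everything else---restricting to continuity sets of $\nu^{\sqcap}$ and $\tilde\nu^{\sqcap}$, the ``up to a $P$-null set'' phrasing forced by $U^2$ being merely monotone, and the bookkeeping at the values $\psi^{\sqcap}(x^2)\in\{0,\infty\}$---is routine and handled exactly as in the standard case.
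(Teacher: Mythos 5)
Your proposal is correct and matches the paper's intent exactly: the paper omits this proof, stating only that it ``is similar to that of Proposition \ref{prop:equihrv}'', and your argument is precisely that adaptation, with the additional (and appropriate) care about the asymmetric cone $[0,\infty]\times(0,\infty]$ requiring both families of test rectangles, which is why \eqref{nusqcapandtildenusqcap} has two lines. Your treatment of the scaling property and of the null-set issues from the monotonicity of $U^2$ is at the same level of rigor as the paper's own proof of Proposition \ref{prop:equihrv}.
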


\begin{rem}
{\rm{ 
\begin{enumerate}[(i)]
\item 
On the semi-parametric structure of $\nu^{\sqcap}$:
We will see that a probability measure $S^{\sqcap}$
  on $[0, \infty]$ determines the limit measure $\tilde
  \nu^{\sqcap}$.
The
  parameter $\gamma^{\sqcap}$ and the probability measure $S^{\sqcap}$
  on $[0, \infty]$  determine $\nu^{\sqcap}$, since
given $\gamma^{\sqcap}$ and 
  measure $S^{\sqcap}$, we get 
 the function
  $\psi^{\sqcap}(\cdot)$  in \eqref{eqn:formsofpsisqcap} 
and $\tilde
  \nu^{\sqcap}$ which through
\eqref{nusqcapandtildenusqcap} determines $\nu^{\sqcap}$.

\item {I}f the support of the distribution of $X^2$ is smaller
  than that of $X^1$, then $U^2(X^1)$ could take the value
  $\infty$, {so in this case}
we consider $U^2(X^1)$ as an
  extended random variable.  
\end{enumerate}
}}
\end{rem}

{The method that shows} $\tilde \nu^0([1,
\infty]^2) = 1$ {also  shows} $\tilde \nu^{\sqcap}([0, \infty]
\times [1, \infty]) = 1$. Proposition 4 of
\cite{heffernan:resnick:2007} and Proposition
\ref{prop:nonstanequihrv} {give} {a}  convergence relation
{in new coordinates.}

\begin{prop}\label{prop:nonstandecomp}
{{ The convergence in \eqref{eqn:hiddendoamediumcone} is equivalent to
\begin{equation}\label{eqn:nonstandecomp}
nP\left[ \left( \frac{U^2\left(X^2\right)}{n} ,
    \frac{U^2(X^1)}{U^2(X^2)} \right) \in \cdot \right]
\stackrel{v}{\rightarrow} \nu_1 \times  S^{\sqcap}(\cdot)\qquad
(\text{in }M_+( (0, \infty] \times [0, \infty])),
\end{equation}
where $\nu_1$ is a Pareto measure
on $(0, \infty]$ satisfying $\nu_1((x, \infty]) = x^{-1}$
for $x > 0$, and $S^{\sqcap}$ is a probability measure on $[0,
\infty]$,
called the standardized hidden spectral
measure.  The relation between $\tilde \nu^{\sqcap}$
given in \eqref{equihrv} and $S^{\sqcap}$ is
 \begin{equation}\label{tildenusqcapandssqcap}
 \tilde \nu^{\sqcap} \left( \left \{ {\bx } \in [0, \infty] \times (0, \infty] : x^2 \ge r, x^1/x^2 \in \Lambda \right \} \right) = r^{-1} S^{\sqcap}(\Lambda),
 \end{equation}
which holds for all $r >0$ and all Borel sets $\Lambda \subset [0, \infty]$.
}}
\end{prop}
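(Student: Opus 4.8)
The plan is to reduce the assertion, via Proposition~\ref{prop:nonstanequihrv}, to a statement about the regularly varying limit $\tilde\nu^{\sqcap}$ alone, and then to run a polar-type change of coordinates exactly as in the standard case (Proposition~\ref{prop:decomp}). By Proposition~\ref{prop:nonstanequihrv}, the convergence \eqref{eqn:hiddendoamediumcone} is equivalent to \eqref{nonstanequihrv}, where $\tilde\nu^{\sqcap}$ is homogeneous of order $-1$ as in \eqref{scaling_nusqcaptilde} and, by the argument quoted just above that normalizes $\tilde\nu^0$, satisfies $\tilde\nu^{\sqcap}([0,\infty]\times[1,\infty])=1$. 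So it suffices to prove that \eqref{nonstanequihrv} is equivalent to \eqref{eqn:nonstandecomp} and to pin down the angular factor as $S^{\sqcap}$.

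First I would introduce the map $T\colon[0,\infty]\times(0,\infty]\to(0,\infty]\times[0,\infty]$ defined by $T(x^1,x^2)=(x^2,\,x^1/x^2)$, with the conventions $0/\infty=0$, $\infty/\infty=1$ and $x^1/\infty=0$ for finite $x^1$. Restricted to $\{x^2<\infty\}$ this is a homeomorphism onto $(0,\infty)\times[0,\infty]$ with inverse $(r,a)\mapsto(ar,r)$, and the dilation $\bx\mapsto c\bx$ is conjugated to $(r,a)\mapsto(cr,a)$, which acts only in the radial variable $r$. The leftover face $\{x^2=\infty\}$ is $\tilde\nu^{\sqcap}$-null (since $\tilde\nu^{\sqcap}([0,\infty]\times[M,\infty])=M^{-1}\to0$) and is mapped into $\{r=\infty\}$, which is $(\nu_1\times S^{\sqcap})$-null because $\nu_1(\{\infty\})=0$; so $T$ is a bijection between the mass-carrying parts of the two cones. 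One then checks that $T$ is proper in the sense required to carry vague convergence both ways — preimages of sets bounded away from $\{x^2=0\}$ are bounded away from $\{r=0\}$, and conversely — so that \eqref{nonstanequihrv} is equivalent to regular variation of $(U^2(X^2)/n,\ U^2(X^1)/U^2(X^2))$ on $(0,\infty]\times[0,\infty]$ with limit $T_*\tilde\nu^{\sqcap}$, and \eqref{scaling_nusqcaptilde} becomes: $T_*\tilde\nu^{\sqcap}$ is homogeneous of order $-1$ in the radial variable only.

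A Radon measure on $(0,\infty]\times[0,\infty]$ with that one-coordinate homogeneity is necessarily a product of the Pareto measure $\nu_1$ (on the radial axis) with some Radon measure $S^{\sqcap}$ (on the angular axis); this is the content of Proposition~4 of \cite{heffernan:resnick:2007} in the present setting, and it gives $T_*\tilde\nu^{\sqcap}=\nu_1\times S^{\sqcap}$, i.e.\ \eqref{eqn:nonstandecomp}. That $S^{\sqcap}$ is a probability measure is then forced by the normalization above: since $T$ carries $\{x^2\ge1\}$ to $[1,\infty]\times[0,\infty]$, we get $1=(\nu_1\times S^{\sqcap})([1,\infty]\times[0,\infty])=\nu_1([1,\infty])\,S^{\sqcap}([0,\infty])=S^{\sqcap}([0,\infty])$. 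Formula \eqref{tildenusqcapandssqcap} is immediate, because $T$ sends $\{\bx:\,x^2\ge r,\ x^1/x^2\in\Lambda\}$ onto $[r,\infty]\times\Lambda$ and $(\nu_1\times S^{\sqcap})([r,\infty]\times\Lambda)=r^{-1}S^{\sqcap}(\Lambda)$. Conversely, if \eqref{eqn:nonstandecomp} holds then $T^{-1}_*(\nu_1\times S^{\sqcap})$ is automatically $(-1)$-homogeneous, so \eqref{nonstanequihrv} holds and hence, by Proposition~\ref{prop:nonstanequihrv}, so does \eqref{eqn:hiddendoamediumcone}.

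I expect the main obstacle to be the topological bookkeeping for $T$: checking that it is bicontinuous off the $\tilde\nu^{\sqcap}$-null face $\{x^2=\infty\}$ and proper between the punctured cones $[0,\infty]\times(0,\infty]$ and $(0,\infty]\times[0,\infty]$, so that vague convergence — equivalently, convergence on the generating sets $(x^1,\infty]\times(x^2,\infty]$ and $[0,x^1]\times(x^2,\infty]$ with $x^2>0$, whose closures avoid the deleted boundary — transfers cleanly through $T$ and $T^{-1}$. Once that is in place, the product representation is a direct appeal to the cited Heffernan--Resnick result plus the one-coordinate scaling, and \eqref{tildenusqcapandssqcap} together with the total mass of $S^{\sqcap}$ are routine set computations.
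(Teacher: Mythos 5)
Your proposal is correct and follows essentially the same route as the paper, which likewise reduces the claim via Proposition \ref{prop:nonstanequihrv}, normalizes so that $\tilde\nu^{\sqcap}([0,\infty]\times[1,\infty])=1$, and invokes Proposition 4 of Heffernan--Resnick for the product decomposition in polar-type coordinates. The paper leaves the proof at the level of those citations, so your elaboration of the map $T$, its properness, and the null face $\{x^2=\infty\}$ is simply a more detailed write-up of the intended argument.
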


\subsection{Examples} 
{We give} examples of distributions of $\bX=(X^1,X^2)$,
{$X^1\stackrel{d}{\neq}X^2$}, and  which {have}
HDA.
 In {E}xample \ref{eg:one},
the limit measure $\nu^{\sqcap}$ of \eqref{eqn:hiddendoamediumcone}
puts zero mass {on} all sets of the form $(x^1, \infty] \times (x^2,
\infty]$ for ${\bx } \in \E^{\sqcap}$ and  HDA also holds
on $\E^0$. In {E}xample {\ref{eg:two}},  $\nu^{\sqcap}$ of
\eqref{eqn:hiddendoamediumcone} puts non-zero mass {on} sets of the
form $(x^1, \infty] \times (x^2, \infty]$ for ${\bx } \in
\E^{\sqcap}$. 

\begin{ex}\label{eg:one}
{\rm
 Let $X^1 \sim \exp(1)$, $X^2 \sim \exp(2)$ and $X^1$ and $X^2$ be independent. Then we get
\begin{align*}
n\left(1 - P\left[ X^1 -\log n \le  x^1, X^2 - \log n \le x^2 \right] \right) &= n\left[1 - \left(1 -e^{ -(\log n + x^1)}\right)\left(1 - e^{ -2(\log n + x^2)} \right) \right]\\
& \rightarrow e^{-x^1} \quad \text{as $n \to \infty$},
\end{align*}
which implies \eqref{doabigcone} holds on $\E = [-\infty, \infty]^2
\setminus \{(-\infty, \infty)\}$ with $\nu\left( {\left([-\infty, x^1]
      \times [-\infty, x^2]\right)}^c\right) = e^{-x^1}$
{and $\nu$ puts mass only on $(-\infty,\infty]\times \{-\infty\}$
and} $\nu$ has {zero} second  marginal. {S}o we {seek} HDA on
$\E^{\sqcap}$. {A}s $n \to \infty$, 
\begin{align*}
nP\left[ X^1 -\frac{\log n}{2} \le  x^1, X^2 - \frac{\log n}{2} > x^2 \right] &= n \left(1 -e^{ -(\frac{\log n}{2} + x^1)}\right) e^{ -2(\frac{\log n}{2} + x^2)} \rightarrow e^{-2x^2},
\intertext{and also as $n \to \infty$,}
nP\left[ X^1 -\frac{\log n}{2} >  x^1, X^2 - \frac{\log n}{2} > x^2 \right] &= n e^{ -(\frac{\log n}{2} + x^1)} e^{ -2(\frac{\log n}{2} + x^2)}  \rightarrow 0.
\end{align*}
Thus, HDA exists on $\E^{\sqcap} = [-\infty, \infty] \times (-\infty,
\infty]$ with limit measure $\nu^{\sqcap}$, where
$\nu^{\sqcap}([-\infty, x^1] \times(x^2, \infty]) = e^{-2x^2}$ and
$\nu^{\sqcap}((x^1, \infty] \times(x^2, \infty]) = 0$ for ${\bx } \in
\E^{\sqcap}$ {so $\nu^\sqcap $ concentrates on 
$\{-\infty\}\times   (-\infty,\infty].$} {After peeling away both lines through $-\binfty $,
 we  look} for HDA on $\E^0$. 
{A hint for how to proceed is provided by $X^1\wedge X^2 \sim
  \text{exp}(3)$.}   Note that
  as $n \to \infty$, 
\begin{align*}
nP\left[ X^1 -\frac{\log n}{3} >  x^1, X^2 - \frac{\log n}{3} > x^2 \right] &= n e^{ -(\frac{\log n}{3} + x^1)} e^{ -2(\frac{\log n}{3} + x^2)}  \rightarrow e^{-(x^1 + 2x^2)} .
\end{align*}
Thus HDA exists on $\E^0 =  (-\infty, \infty]^2$ with limit measure $\nu^0$, where $\nu^0( (x^1, \infty] \times (x^2, \infty]) = e^{-(x^1+2x^2)}$ for ${\bx } \in \E^0$.

{F}or this example, {Definition \ref{defn:nonstansecondstage} holds}
and HDA holds on both the cones $\E^{\sqcap}$ and $\E^0$,
}
\end{ex}

\begin{ex}\label{eg:two}
{\rm{ 
Suppose $E_1,E_2,E_3$ are iid exp(1) random variables independent of
$B \sim$ Bernoulli($1/2$) and define
${\bX }$ as
$$ {\bX } = B(E_1, E_3/3) + (1-B)(E_2/2, E_2/2). $$
As $n \to \infty$,
\begin{align*}
n& \left(1 - P\left[ X^1 -\log n \le  x^1, X^2 - \log n \le x^2 \right] \right) \\
&= n\left[1 - \left[\frac{1}{2}\left(1 -e^{ -(\log n + x^1)}\right)\left(1 - e^{ -3(\log n + x^2)} \right) + \frac1{2}\left(1 -e^{ -2(\log n + x^1\wedge x^2)}\right) \right] \right] \rightarrow e^{-x^1},
\end{align*}
which implies \eqref{doabigcone} holds on $\E = [-\infty, \infty]^2
\setminus \{(-\infty, \infty)\}$ with $\nu\left( {\left([-\infty, x^1]
      \times [-\infty, x^2]\right)}^c\right) = e^{-x^1}$ {and
  $\nu$ concentrates on $(-\infty, \infty]\times \{-\infty\}$}. {Thus}
 $\nu$ has {zero} second  marginal and we seek HDA on
$\E^{\sqcap}$. Note that 
\begin{align*}
n&P\left[ X^1 -\frac{\log n}{2} \le  x^1, X^2 - \frac{\log n}{2} > x^2 \right] \\
&= n \left[ \frac1{2}\left(1 -e^{ -(\frac{\log n}{2} + x^1)}\right) e^{ -3(\frac{\log n}{2} + x^2)} + \frac1{2}\left(e^{ -2(\frac{\log n}{2} + x^2)} -e^{ -2(\frac{\log n}{2} + x^1)}\right)1_{\{x^2 < x^1\}} \right] \\
&\rightarrow \frac1{2}\left(e^{-2x^2} - e^{-2x^1}\right)1_{\{x^2 < x^1\}} \quad \text{as } n \to \infty,
\intertext{and also}
n&P\left[ X^1 -\frac{\log n}{2} >  x^1, X^2 - \frac{\log n}{2} > x^2 \right] \\
&= n \left[ \frac1{2}e^{ -(\frac{\log n}{2} + x^1)} e^{ -3(\frac{\log n}{2} + x^2)} + \frac1{2}e^{ -2(\frac{\log n}{2} + x^1 \vee x^2)} \right] \rightarrow \frac1{2}e^{-2(x^1 \vee x^2)} \quad \text{as } n \to \infty.
\end{align*}
Thus, HDA exists on $\E^{\sqcap} = [-\infty, \infty] \times (-\infty,
\infty]$ with limit measure $\nu^{\sqcap}$, where
$\nu^{\sqcap}([-\infty, x^1] \times(x^2, \infty]) =
\frac1{2}\left(e^{-2x^2} - e^{-2x^1}\right)1_{\{x^2 < x^1\}}$ and
$\nu^{\sqcap}((x^1, \infty] \times(x^2, \infty]) = \frac1{2}e^{-2(x^1
  \vee x^2)}$ for ${\bx } \in \E^{\sqcap}$. {In fact, $\nu^\sqcap$
  concentrates on the line $\{(x,x): x\in (-\infty,\infty) {\}}$.}

Since $\nu^{\sqcap}((x^1,
\infty] \times(x^2, \infty]) > 0$ for ${\bx } \in \E^{\sqcap}$, we do
not seek HDA on $\E^0$.}
}
\end{ex}

\subsection{Estimation methods}\label{sec:nonstanestimation}
To estimate joint tail probabilities, we
{require an estimate of}
the limit measure $\nu^{\sqcap}$  given in Definition
\ref{defn:hiddendoamediumcone} {and possibly} $\nu^0$ given in Definition
\ref{defn:nonstansecondstage}. {E}stimation of $\nu^0$ follows the
same steps {as}  in Section \ref{sec:stanestimation} {s}o we
concentrate {on}  {estimating} $\nu^{\sqcap}$. Let, $\{ {\bX },
{\bX }_i, i = 1, 2, \cdots, n \}$ be iid where the distribution of
${\bX }$ satisfies \eqref{eqn:hiddendoamediumcone}. From
\eqref{eqn:hiddendoamediumcone} we get \citep[page
139]{resnickbook:2007}  {i}n $M_+(\E^{\sqcap})$
\begin{align}\label{eqn:nusqcapestimation1}
\frac{1}{k} \sum_{i=1}^n \epsilon_{\left( \frac{X^1_i -
      f(n/k)}{e(n/k)}, \frac{X^2_i - f(n/k)}{e(n/k)}\right)}(\cdot)
\Rightarrow \nu^{\sqcap}(\cdot)  \qquad (k \to \infty, \,n/k
\to \infty).
\end{align}
{We know} from \eqref{eqn:scompdoa} that the distribution of
 $X^2$ characterizes $\{ e_n \}$ and $\{ f_n \}$ and from the iid data
 $\{ X^2_i: i = 1, 2, \cdots, n \},$ we {can construct} estimat{ors} of
 $e(n/k)$ and $f(n/k)$ denoted by $\hat e(n/k)$ and $\hat f(n/k)$  \citep[page 93]{resnickbook:2007}
 such that  
\begin{align}\label{eqn:enfn convergence}
\frac{e(n/k)}{\hat e(n/k)} \stackrel{P}{\rightarrow} 1, \hskip 1 cm \frac{f(n/k) - \hat f(n/k)}{e(n/k)} \stackrel{P}{\rightarrow} 0.
\end{align}
{Since the limits in \eqref{eqn:enfn convergence} are
  constants,}
we get  joint convergence in $M_+(\E^{\sqcap}) \times \R^2$,
\begin{align}\label{eqn:nusqcapestimation2}
\left( \frac{1}{k} \sum_{i=1}^n \epsilon_{\left( \frac{X^1_i -
        f(n/k)}{e(n/k)}, \frac{X^2_i - f(n/k)}{e(n/k)}\right)},
  \frac{f(n/k) - \hat f(n/k)}{e(n/k)}, \frac{e(n/k)}{\hat e(n/k)}
\right) \Rightarrow \left( \nu^{\sqcap}(\cdot), 0, 1 \right)  
\end{align}
{Apply the continuous mapping theorem to {\eqref{eqn:nusqcapestimation2}} using the map}
 $(\nu(\cdot), b, a) \mapsto \nu(a [ (\cdot) + b] )$ to get in $M_+(\E^{\sqcap})$ 
\begin{align}\label{eqn:nsqcapestimate}  
{\widehat{\nu_n^\sqcap}(\cdot):=}
  \frac{1}{k} \sum_{i=1}^n \epsilon_{\left( \frac{X^1_i - \hat
      f(n/k)}{\hat e(n/k)}, \frac{X^2_i - \hat f(n/k)}{\hat
      e(n/k)}\right)}(\cdot) \Rightarrow \nu^{\sqcap}(\cdot)\qquad
(k \to \infty,\, n/k \to
\infty).
\end{align}


This {estimator of $\nu^\sqcap$} is non-parametric and {as} in
Section \ref{sec:stanestimation}, we  exploit the
semi-parametric structure of $\nu^{\sqcap}$ by {estimating}
$\gamma^{\sqcap}$ and the standardized hidden spectral measure
$S^{\sqcap}$. The parameter $\gamma^{\sqcap}$ is the extreme value
index of the distribution of $X^2$ so {e}stimating {this} from  iid data $\{ X^2_i: i = 1, 2,
\cdots, n \}$ is standard \citep[page 65]{dehaan:ferreira:2006}. 
{We obtain an estimator of} 
 $S^{\sqcap}(\cdot)$ by
{modifying \eqref{defn:new_antiranks} to account for the difference
  between $\E^0$ and $\E^\sqcap$}.
{Since the first step is to}
construct a consistent estimator of $\tilde \nu^{\sqcap}(\cdot)$
{defined in \eqref{nonstanequihrv},}
{define}
\begin{align}\label{defn:nonstannew_antiranks}
R^{1, 2}_i : = \Big{|} \left\{ j: X^2_j \ge X^1_i \right\} \Big{|}
\hskip 0.5 cm \hbox{and} \hskip 0.5 cm  R^{2, 2}_i := \Big{|} \left\{
  j: X^2_j \ge X^2_i \right\} \Big{|}, \qquad (i = 1, 2, \cdots,n,)
\end{align}
where $|\cdot |$ denotes size of  a set. {Observe} $R^{2, 2}_i$ is
just the anti-rank of $X^2_i$ and thus
 $1 \le R^{2, 2}_i \le n$ for $i = 1, 2, \cdots, n$.  Also,  $0 \le
 R^{1, 2}_i \le n$ for $i = 1, 2, \cdots, n$. 
{A}n estimator of $\tilde \nu^{\sqcap}$ is obtained from the
convergence in
$M_+([0, \infty] \times (0, \infty])$:
\begin{align}\label{eqn:tildenusqcapestimate}
\widehat{\tilde
\nu_n^{\sqcap}}(\cdot):=
\frac{1}{k} \sum_{i=1}^n \epsilon_{\left(k/R^{1, 2}_i, \hskip 0.1 cm
    k/R^{2, 2}_i \right)}(\cdot) \Rightarrow \tilde
\nu^{\sqcap}(\cdot) \qquad (k \to \infty, n/k \to \infty).
\end{align}
{The verification of \eqref{eqn:tildenusqcapestimate} follows the steps
used in the proof of Proposition \ref{prop:tildenu0}.}
{Changing coordinate system in \eqref{eqn:tildenusqcapestimate} leads
to an estimator of }
 $\nu_1 \times  S^{\sqcap}(\cdot)$ from the convergence
{in $M_+((0, \infty] \times [0, \infty])$}
\begin{align}\label{eqn:tildenusqcapdecompestimate}
\widehat{\nu_1 \times  S^{\sqcap}}_n(\cdot):=
\frac{1}{k} \sum_{i=1}^n \epsilon_{\left({k}/{ R^{2, 2}_i} ,
   {R^{2, 2}_i}/{R^{1, 2}_i} \right)}(\cdot) \Rightarrow \nu_1
\times  S^{\sqcap}(\cdot) \qquad (k \to \infty, n/k \to \infty)
\end{align}
and this produces an estimator of $S^\sqcap$ since {i}n $M_+([0, \infty])$,
\begin{align}\label{eqn:ssqcappestimate}
\widehat{S_n^{\sqcap}}(\cdot) :=
\frac{ \sum_{i=1}^n \epsilon_{\left({k}/{
R^{2, 2}_i} , {R^{2, 2}_i}/{R^{1, 2}_i} \right)} \left([1, \infty]
\times \cdot \right)}{\sum_{i=1}^n \epsilon_{{k}/{R^{2, 2}_i}
}([1, \infty])} \Rightarrow S^{\sqcap}(\cdot) \qquad (k \to \infty, n/k \to \infty).
\end{align}
{This estimator may be modified as in \eqref{eqn:trs0pestimate}}
using the continuous bijection $TR:[0,\infty]\mapsto [0,1]$ defined by
$TR: x\mapsto x/(1+x)$ to get in $M_+([0,1])$,
\begin{align}\label{eqn:trspestimate}
\widehat{S_n^{\sqcap}}(\cdot) \circ TR^{-1}
\Rightarrow S^\sqcap \circ TR^{-1}(\cdot) .
\end{align}

This summarizes how to obtain consistent estimators for extreme value
index $\gamma^{\sqcap}$ and the standardized hidden spectral measure
$S^{\sqcap}$. 

\section{Detection of HDA}\label{sec:detectionestimation}
Since HDA is a generalization of HRV,  it is not surprising that the
detection  techniques have similarities to
{those used for} HRV; see \citet{mitra:resnick:2010a} and 
\citet[pages 316-340]{resnickbook:2007}. However,  we  deviate from the standard
MEVT by assuming \eqref{doabigcone} instead of \eqref{eqn:mevt} and so
proceed carefully.
A first step is to detect the presence of asymptotic independence,
which traditionally has been done with
a density plot of a spectral measure after non-parametric
transformation to Pareto scale \cite[pages
316-321]{resnickbook:2007}.  When asymptotic independence is present, we
seek HDA.  We consider how to define an appropriate spectral measure
for detection of HDA.

Define $U^{(1)}(\cdot) = 1/(1 - H^{(1)}(\cdot)$, where $H^{(1)}$ is
the distribution function of $X^{(1)}$. From \eqref{doabigcone} {we get
on $(0, \infty]^2$ that}
\begin{equation}\label{eqn:equihrvbigcone}
nP\left[ \left( {U^{(1)}(X^1)}/{n}, {U^{(1)}(X^2)}/{n} \right) \in \cdot \right] \stackrel{v}{\rightarrow} \tilde \nu(\cdot),
\end{equation}
{where $\tilde \nu(\cdot)$ is a Radon measure on $(0, \infty]^2$
 related to the limit measure  $\nu(\cdot)$ in \eqref{doabigcone} by }
\begin{equation}\label{nuandtildenu}
\tilde \nu((x^1, \infty] \times (x^2, \infty] ) = \nu \left(
  \left(\psi^{\leftarrow}(x^1), \infty \right] \times
  \left(\psi^{\leftarrow}(x^2), \infty \right] \right)\qquad (\bx  \in (0, \infty]^2),
\end{equation}
{and} $\psi$ is defined in \eqref{defn:psi} as $\psi(y) := \nu(
\{{\bf{z}} \in \E: z^{(1)} > y \}).$
 The measure $\tilde \nu(\cdot)$ satisfies the scaling
\begin{align}\label{scaling_nutilde}
\tilde \nu( c \cdot) = c^{-1} \tilde \nu( \cdot), \hskip 1 cm c > 0,
\end{align}
and convergence in \eqref{eqn:equihrvbigcone} is equivalent to 
\begin{equation}\label{eqn:decompbigcone}
nP\left[ \left( \frac{U^{(1)}\left(X^{(1)}\right)}{n} , \left( \frac{U^{(1)}(X^1)}{U^{(1)}(X^{(1)})},  \frac{U^{(1)}(X^2)}{U^{(1)}(X^{(1)})} \right) \right) \in \cdot \right] \stackrel{v}{\rightarrow} \nu_1 \times  S(\cdot),
\end{equation}
on $(0, \infty] \times \delta \aleph^{(1)}$, where $\nu_1((y, \infty]) = y^{-1}$ for
$y > 0$, $\delta \aleph^{(1)} =  \{ {\bx }  \in (0, \infty]^2 :
x^{(1)} = 1\}$ and $S$ is a probability measure on $\delta
\aleph^{(1)}$. {The} standardized spectral measure $S$
{is related to}  $\tilde \nu$ in \eqref{eqn:equihrvbigcone}
by
 \begin{equation}\label{tildenuands}
 \tilde \nu \left( \left \{ {\bx } \in (0, \infty]^2 : x^{(1)} \ge r,
     {\bx }/x^{(1)} \in \Lambda \right \} \right) = r^{-1} S(\Lambda),
 \qquad r>0,\, \text{Borel set }\Lambda \subset \delta \aleph^{(1)}.
 \end{equation}
{To
estimate this} measure $S(\cdot)$, we define variants of the anti-ranks
\begin{align}\label{defn:nonstanbig_antiranks}
R^{1, (1)}_i = \bigl{|} \bigl\{ j: X^{(1)}_j \ge X^1_i \bigr\} \bigr{|}
\hskip 0.5 cm \hbox{and} \hskip 0.5 cm  R^{2, (1)}_i = \bigl{|} \bigl\{
  j: X^{(1)}_j \ge X^2_i \bigr\} \bigr{|} \qquad (1\leq i \leq n).
\end{align}
A consistent estimator of $S(\cdot)$ is obtained from the convergence in $M_+(\delta \aleph^{(1)})$
\begin{align}\label{eqn:sestimate}
\hat{S_n} :=
\frac{ \sum_{i=1}^n \epsilon_{\left(\frac{k}{R^{1, (1)}_i \wedge R^{2,
          (1)}_i} , \left(\frac{R^{1, (1)}_i \wedge R^{2,
            (1)}_i}{R^{1, (1)}_i}, \frac{R^{1, (1)}_i \wedge R^{2,
            (1)}_i}{R^{2, (1)}_i} \right) \right)} \left([1, \infty]
    \times \cdot \right)}{\sum_{i=1}^n \epsilon_{\frac{k}{R^{1, (1)}_i
      \wedge R^{2, (1)}_i} }([1, \infty])} \Rightarrow S(\cdot),
\end{align}
$(k \to \infty,\, n/k \to \infty)$. The continuous bijection $T: \delta \aleph^{(1)} \mapsto [0,1]$ given by $T: {\bx } \mapsto x^2/(x^1 + x^2)$ transforms \eqref{eqn:sestimate} to 
$\hat{S_n} \circ T^{-1}  \Rightarrow S \circ T^{-1}(\cdot) $ 
in $M_+([0, 1])$. A density  plot of $\hat{S_n} \circ T^{-1}$  {is}
easier to analyze because $[0,1]$ is a nicer space than
$\delta \aleph^{(1)}$. 

Analyzing the density plot using the points of  $\hat{S_n} \circ T^{-1} $
should yield evidence falling into the following categories:
\begin{enumerate}[(i)]
\item The distribution $S \circ T^{-1}$ concentrates near  $0$, so
  remove {$\{(x,y) \in \R^2: y = -\infty\}$} and seek HDA on
  {$\E^{\sqcap} = [-\infty, \infty] \times (-\infty, \infty]$ or its
    first quadrant analogue, depending on the distribution of the
    second component of the random vector; 
    see Remark \ref{rem:hiddendoamediumcone}
    \eqref{rem2:hiddendoamediumcone}.} 
\item The distribution $S \circ T^{-1}$ concentrates near   $1$, so
 remove {$\{(x,y) \in \R^2: x = -\infty\}$} and seek HDA on
 {$(-\infty, \infty] \times [-\infty, \infty]$ or its first quadrant
   analogue depending on the distribution of the first component of the random
   vector.} 
  
\item The distribution $S \circ T^{-1}$  concentrates near  $0$ and
  $1$, so  remove {$\{(x,y) \in \R^2: x = -\infty\} \cup \{(x,y) \in
    \R^2: y = -\infty\}$} and seek HDA on {$\E^0 = (-\infty, \infty]
    \times (-\infty, \infty]$ or its first quadrant analogue,
    depending on 
the distribution of the smallest
    component of the random vector; see Remark \ref{rem:hdoa}
    \eqref{rem3:hdoa}.} 
  
  \item The distribution $S \circ T^{-1}$ does not have any of the above
  properties; {we have no evidence for asymptotic independence} and we do not consider HDA.
\end{enumerate}

{We summarize our detection strategy and for concreteness assume
$S \circ T^{-1}$ satisfies category (i):
Check whether $X^2$ belongs to some maximal domain of attraction using
a Hill or Pickands plots. If so, conclude  HDA exists on
$\E^{\sqcap}$.  Then consider
whether  HDA exists also on $\E^0$
by examining a similar kernel density plot formed by using the points
of the estimator of $S^{\sqcap} \circ {(TR)}^{-1}$ given in
\eqref{eqn:trspestimate}. 
} 

\section{Conclusion}\label{sec:conclusion}
We defined HDA {as} a generalization of HRV {and} have shown by
example that for some random vectors, HDA exists but HRV does
not. Using similar methods as in HRV, we outlined detection and
estimation methods for HDA. These methods are given to show what is
possible and to emphasize there is a gap that such methods can
  fill to provide improved estimates of probability of simultaneous
  exceedance by components of a risk vector. 
However, we have not implemented the methods nor demonstrated
utility by analyzing data. This will come in the
  future.

Our discussion here is
{restricted to  two-dimensions. As}
observed for HRV \citep{mitra:resnick:2010a},  extensions
to higher dimensions are not always straightforward and involve
subtleties.
In particular, in higher dimensions there are many more ways domains
of attraction could be hidden and many more subspaces to explore for
behavior that helps to estimate risk probabilities.

As with HRV \citep{mitra:resnick:2010a}, our detection and estimation
methods are {exploratory} and  our estimators are only provably
consistent. More formal statistical theory is needed to turn
exploratory methods into confirmatory ones.


\bibliography{/Users/sidresnick/Documents/SidFiles/bibfile}

\def\cprime{$'$}
\begin{thebibliography}{17}
\providecommand{\natexlab}[1]{#1}
\providecommand{\url}[1]{\texttt{#1}}
\expandafter\ifx\csname urlstyle\endcsname\relax
  \providecommand{\doi}[1]{doi: #1}\else
  \providecommand{\doi}{doi: \begingroup \urlstyle{rm}\Url}\fi

\bibitem[Balkema and de~Haan(1972)]{balkema:dehaan:1972}
A.~A. Balkema and L.~de~Haan.
\newblock On {R}. von {M}ises' condition for the domain of attraction of
  {$\exp(-e^{-x})^{1}$}.
\newblock \emph{Ann. Math. Statist.}, 43:\penalty0 1352--1354, 1972.
\newblock ISSN 0003-4851.

\bibitem[Billingsley(1999)]{billingsley:1999}
P.~Billingsley.
\newblock \emph{Convergence of Probability Measures}.
\newblock John Wiley \& Sons Inc., New York, second edition, 1999.
\newblock ISBN 0-471-19745-9.
\newblock A Wiley-Interscience Publication.

\bibitem[Bruun and Tawn(1998)]{bruun:tawn:1998}
J.T. Bruun and J.A. Tawn.
\newblock {Comparison of approaches for estimating the probability of coastal
  flooding.}
\newblock \emph{J. R. Stat. Soc., Ser. C, Appl. Stat.}, 47\penalty0
  (3):\penalty0 405--423, 1998.

\bibitem[de~Haan(1978)]{dehaan:1978}
L.~de~Haan.
\newblock A characterization of multidimensional extreme-value distributions.
\newblock \emph{Sankhy\=a Ser. A}, 40\penalty0 (1):\penalty0 85--88, 1978.
\newblock ISSN 0581-572X.

\bibitem[de~Haan and de~Ronde(1998)]{dehaan:deronde:1998}
L.~de~Haan and J.~de~Ronde.
\newblock Sea and wind: multivariate extremes at work.
\newblock \emph{Extremes}, 1\penalty0 (1):\penalty0 7--46, 1998.

\bibitem[de~Haan and Ferreira(2006)]{dehaan:ferreira:2006}
L.~de~Haan and A.~Ferreira.
\newblock \emph{Extreme Value Theory: An Introduction}.
\newblock Springer-Verlag, New York, 2006.

\bibitem[Heffernan and Resnick(2005)]{heffernan:resnick:2005}
J.E. Heffernan and S.I. Resnick.
\newblock Hidden regular variation and the rank transform.
\newblock \emph{Adv.\ Appl.\ Prob.}, 37\penalty0 (2):\penalty0 393--414, 2005.

\bibitem[Heffernan and Resnick(2007)]{heffernan:resnick:2007}
J.E. Heffernan and S.I. Resnick.
\newblock Limit laws for random vectors with an extreme component.
\newblock \emph{Ann. Appl. Probab.}, 17\penalty0 (2):\penalty0 537--571, 2007.
\newblock ISSN 1050-5164.
\newblock \doi{10.1214/105051606000000835}.

\bibitem[Huang(1992)]{huang:1992}
Xin Huang.
\newblock \emph{Statistics of Bivariate Extreme {V}alues}.
\newblock Ph.{D}. thesis, {T}inbergen {I}nstitute {R}esearch {S}eries 22,
  Erasmus University Rotterdam, Postbus 1735, 3000DR, Rotterdam, The
  Netherlands, 1992.

\bibitem[Ledford and Tawn(1996)]{ledford:tawn:1996}
A.W. Ledford and J.A. Tawn.
\newblock Statistics for near independence in multivariate extreme values.
\newblock \emph{Biometrika}, 83\penalty0 (1):\penalty0 169--187, 1996.
\newblock ISSN 0006-3444.

\bibitem[Ledford and Tawn(1998)]{ledford:tawn:1998}
A.W. Ledford and J.A. Tawn.
\newblock Concomitant tail behaviour for extremes.
\newblock \emph{Adv. in Appl. Probab.}, 30\penalty0 (1):\penalty0 197--215,
  1998.
\newblock ISSN 0001-8678.

\bibitem[Mitra and Resnick(2010)]{mitra:resnick:2010a}
A.~Mitra and S.I. Resnick.
\newblock {Hidden Regular Variation: Detection and Estimation}.
\newblock \emph{Arxiv preprint arXiv:1001.5058}, 2010.

\bibitem[Poon et~al.(2003)Poon, Rockinger, and Tawn]{poon:rockinger:tawn:2003}
S.-H Poon, M.~Rockinger, and J.~Tawn.
\newblock Modelling extreme-value dependence in international stock markets.
\newblock \emph{Statist. Sinica}, 13\penalty0 (4):\penalty0 929--953, 2003.
\newblock ISSN 1017-0405.
\newblock Statistical applications in financial econometrics.

\bibitem[Resnick(2002)]{resnick:2002a}
S.I. Resnick.
\newblock Hidden regular variation, second order regular variation and
  asymptotic independence.
\newblock \emph{Extremes}, 5\penalty0 (4):\penalty0 303--336 (2003), 2002.
\newblock ISSN 1386-1999.

\bibitem[Resnick(2007)]{resnickbook:2007}
S.I. Resnick.
\newblock \emph{Heavy Tail Phenomena: Probabilistic and Statistical Modeling}.
\newblock Springer Series in Operations Research and Financial Engineering.
  Springer-Verlag, New York, 2007.
\newblock ISBN: 0-387-24272-4.

\bibitem[Resnick(2008)]{resnickbook:2008}
S.I. Resnick.
\newblock \emph{Extreme Values, Regular Variation and Point Processes}.
\newblock Springer, New York, 2008.
\newblock ISBN 978-0-387-75952-4.
\newblock Reprint of the 1987 original.

\bibitem[Smith(2003)]{smith:2003}
R.L. Smith.
\newblock Statistics of extremes, with applications in environment, insurance
  and finance.
\newblock In B.~Finkenstadt and H.~Rootz\'en, editors, \emph{SemStat: Seminaire
  Europeen de Statistique, Exteme Values in Finance, Telecommunications, and
  the Environment}, pages 1--78. Chapman-Hall, London, 2003.

\end{thebibliography}
  \end{document}